\newcommand{\propref}[1]{{Proposition~\ref{#1}}}
\renewcommand{\eqref}[1]{{(\ref{#1})}}
\newcommand{\R}{\mathbb{R}}
\newcommand{\N}{\mathbb{N}}
\newtheorem{dfn}{Definition}
\newtheorem{thm}{Theorem}
\newtheorem{lem}{Lemma}
\newcommand{\lemref}[1]{{Lemma~\ref{#1}}}
\newtheorem{ass}{Assumption}
\newtheorem{prp}{Proposition}
\newtheorem{rem}{Remark}
\begin{document}
	
	\title{Strong convergence rates  of  an exponential integrator  and finite elements method for time-fractional SPDEs driven by Gaussian and non-Gaussian noises}
	
	
	\titlerunning{Strong convergence of an exponential integrator for time-fractional SPDEs}        
	
	\author{Aurelien Junior Noupelah, Antoine Tambue.
	}
	
	\authorrunning{A. J.  Noupelah, A. Tambue} 
	
	\institute{A. J.  Noupelah \at
		Department of Mathematics and Computer Sciences, University of Dschang, P.O. BOX 67, Dschang, Cameroon\\
		\email{noupsjunior@yahoo.fr}   \\
		\and
		A. Tambue (Corresponding author) \at
		 Department of Computer science, Electrical engineering and Mathematical sciences,  Western Norway University of Applied Sciences, Inndalsveien 28, 5063 Bergen.\\  
		\email{antonio@aims.ac.za, tambuea@gmail.com} 
	}
	\date{Received: date / Accepted: date}

	\maketitle
	
	\begin{abstract}
		In this work, we provide the first strong convergence result of numerical approximation of a general second order semilinear stochastic fractional order evolution equation involving a Caputo derivative in time of order $\alpha\in(\frac 34, 1)$ and driven by Gaussian and non-Gaussian noises simultaneously more useful in concrete applications. The Gaussian noise considered here is a Hilbert space valued Q-Wiener process and the non-Gaussian noise is defined through compensated Poisson random measure associated to a L\'evy process. The linear operator is not necessary self-adjoint. The  fractional stochastic partial differential equation is discretized in space by the finite element method and in time by a variant of the exponential integrator  scheme. We investigate the mean square error estimate of our fully discrete scheme and the result shows how the convergence orders depend on the regularity of the initial data and the power of the fractional derivative.
		
		\keywords{Time fractional derivative \and  Second order semilinear stochastic evolution equation \and Mittag-Leffler function \and Finite element method \and Exponential integrator  scheme  \and  Error estimates.}
		\subclass{ MSC 65C30  \and MSC 74S05 \and MSC 74S60 }
		
	\end{abstract}

	\section{Introduction}
	\label{intro}
	We consider the  following SPDE  with initial value 
	\begin{equation}
		\label{model_timfrac}
		\left\{
		\begin{array}{ll}
			\partial^{\alpha}_tX(t)=AX(t)+F(X(t))+B(X(t))\frac{dW(t)}{dt}+\frac{\int_{\mathcal{X}}G(z,X(t))\widetilde{N}(dz,dt)}{dt}, \\
			X(0)=X_0,\,\,\,\,t\in[0,T],
		\end{array}
		\right.
	\end{equation} 
	on the Hilbert space $H=\left(L^2(\Lambda),\left\langle \cdot,\cdot\right\rangle_H,\left\|\cdot\right\|\right)$,   $\Lambda\subset\R^d$, $d=1,2,3$, where $T>0$ is the final time, $A$ is a linear operator which is unbounded, not necessarily self-adjoint and is assumed to generate an analytic semigroup $S(t):=e^{tA}$. Note that $\partial^{\alpha}_t$ denotes the Caputo fractional derivative with $\alpha\in(\frac 34,1)$,  $W(t)=W(x,t)$ is a $H$-valued $Q$-Wiener process defined in a filtered probability space $(\Omega, \mathcal{F}, \mathbb{P},\{\mathcal{F}_t\}_{t\geq 0})$, where the covariance operator $Q: H\rightarrow H$ is a positive and linear self-adjoint operator. The filtration is assumed to fulfil the usual assumptions (see \cite[Def 2.1.11]{Prevot}). The $Q$-Wiener process $W(t)$ can be represented as follows \cite{Prevot}
	\begin{equation}
		\label{def_Brow}
		W(x,t)=\sum_{i=0}^{\infty}\beta_i(t)Q^{\frac 12}e_i(x)=\sum_{i=0}^{\infty}\sqrt{q_i}\beta_i(t)e_i(x).
	\end{equation}
	where $q_i,\,e_i,\,i\in\N$ are respectively the eigenvalues and eigenfunctions of the covariance operator $Q$, and $\beta_i$ are mutually independent and identically distributed standard normal distributions. The mark set $\mathcal{X}$ is defined by $\mathcal{X}:=H-\{0\}$. For a given set $\Gamma$, we denote by $\mathcal{B}(\Gamma)$ the smallest $\sigma$-algebra containing all open sets of $\Gamma$. Let $\left(\mathcal{X},\mathcal{B}(\mathcal{X}),\upsilon\right)$ be a $\sigma$-finite measurable space and  $\upsilon$ (with $\upsilon\neq 0$) a Levy measurable on $\mathcal{B}(\mathcal{X})$ such that
	\begin{equation}
		\upsilon(\{0\})=0,\,\,\,\,\,\,\,\,\,\,\int_{\mathcal{X}}\min(\left\|z\right\|^2,1)\upsilon(dz)<\infty.
	\end{equation}
	Let $N(dz,dt)$ be the $H$-valued Poisson distributed $\sigma$-finite measure on the product $\sigma$-algebra $\mathcal{B}(\mathcal{X})$ and $\mathcal{B}(\R_+)$ with intensity $\upsilon(dz)dt$, where $dt$ is the Lebesque measure on $\mathcal{B}(\R_+)$. In our model problem \eqref{model_timfrac}, $\widetilde{N}(dz,dt)$ stands for the compensated Poisson random measure defined by
	\begin{equation}
		\widetilde{N}(dz,dt):=N(dz,dt)-\upsilon(dz)dt.
	\end{equation}
	Note that $\widetilde{N}(dz,dt)$ is a noncontinuous martingale with mean 0 (see e.g \cite{Man}). The Wiener process $W$ and the compensated Poisson measure $\widetilde{N}$ are supposed to be independent. Precise assumptions on the nonlinear functions $F$, $B$ and $G$ to ensure the existence of the mild solution of \eqref{model_timfrac} will be given in the following section.
	
	In the last few decades, fractional calculus has become of increasing interest to researchers in various fields of science and technology. The theory of fractional partial differential equations has gained considerable interest over time, and since most of these equations have no analytical solutions, numerical schemes are the only  tools  to provide good approximations.
	For deterministic  equation ($G=B=0$) and  self adjoint operator $A$, Lin and Xu \cite{Lin}  have considered  the numerical approximation of time-fractional diffusion equation and proposed an algorithm  based on the  finite difference scheme in time and Legendre spectral method in space. In the same context,  high order finite element method and mixed finite element scheme have been studied in \cite{Jia,Liu}.  The numerical methods to solve the fractional heat equation with Dirichlet condition, involving a Riemann Liouville fractional derivative in time has presented in \cite{For,Pri}.  Gao et al. \cite{Gao} presented a  novel fractional numerical method (called $L_{1-2}$ formula) to approximate the Caputo fractional derivative order $\alpha$ ($0<\alpha<1$) with a modification of the classical $L_1$ formula and prove that the computational efficiency and  the numerical accuracy of the new formula are superior than the standard  $L_1$ formula. The Galerkin finite element approximation for time-fractional Navier-Stokes and of  the semilinear time-fractional subdiffusion problem is studied in \cite{Lia,Alm}. In \cite{Osm}, the authors developed an alternative numerical method based upon the Keller Box method for the subdiffusion equation and in \cite{Elz} Elzaki et al. used the decomposition method coupled with Elzaki transform to construct appropriate solutions to multi-dimensional wave, Burger and Klein-Gordon equations of fractional order. The authors in \cite{Yan} presented a new type of discrete fractional Gronwall inequality and they used it to analyse the stability and  the convergence of  the Galerkin spectral method for a linear time-fractional subdiffusion equation. 
	Note that the time stepping methods used in all the works mentioned until now  are based on finite difference methods. However theses schemes are explicit, but unstable, unless the time stepsize is very small.  To solve that drawback, numerical method based on exponential integrators of Adams type have  been  proposed in \cite{ExpF}. The price to pay is the computation of Mittag-Leffler (ML) matrix functions. As ML matrix function is the generalized form of the exponential of matrix function, works in \cite{ML1,ML2,ML3} have extended  some exponential  computational techniques to ML. Note that up to  now all the numerical algorithms presented are for  time fractional deterministic PDEs with self adjoint  linear operators.
	
	However in  order to represent real-world physical phenomena more accurately, it is necessary to take into account stochastic disturbances from uncertain input data.  The uncertain  is usually  modelled by  including  the  standard Brownian motion (Gaussian noise)  and the corresponding model equation is given by \eqref{model_timfrac} with $G=0$.  Few works have been done for numerical methods for Gaussian noise and time fractional stochastic partial differential equation \eqref{model_timfrac} with $G=0$, even when  the linear operator $A$ is self adjoint. To the best of our knowledge, \cite{Zoua} is the first of basic theory and numerical method for a class of these fractional SPDEs. 
	In \cite{Zoub}, the authors developed the fully discrete Galerkin finite element method for solving the time-fractional stochastic diffusion \footnote{So the corresponding linear operator is self adjoint.} equations  based on the approximations of the Mittag-Leffler function. Indeed the temporal integration is similar to the  deterministic exponential scheme  in \cite{ExpF}.
	In \cite{Gun}, the authors  provided  rigorous convergence of numerical method for solving the stochastic time-fractional partial differential equation where  the 
	temporal discretization is done by the backward-Euler convolution quadrature.  Note that all the above works have been done for self adjoint linear operator $A$, so numerical  study  for \eqref{model_timfrac} with $G=0$ and  non  self adjoint operator $A$
	is still an open problem in the field, to the best of our knowledge.

	Furthermore  in finance for example,  the unpredictable nature of many events such as market crashes, announcements made by the central banks, changing credit risk, insurance in a changing risk, changing face of operational risk \cite{Platen1,Tankov} might have sudden and significant impacts on the stock price. In such situation, the more  realistic model  is built by  incorporating a non-Gaussian noise such as L\'evy process or Poisson random measure to model such events. The corresponding equation is our model equation given in \eqref{model_timfrac}.  As we have mentioned,  numerical schemes for such SPDE of type \eqref{model_timfrac} driven by Gaussian and non Gaussian noises have been lacked in the scientific literature,  our goal will be to fill that gap by extending the exponential scheme \cite{Lor,Nou} to time-fractional  SPDE of type \eqref{model_timfrac}. The extension is extremely complicated since the ML function is more  challenging than the exponential function. Using novel technical results that we have developed here, we have provided the strong convergence of our full discrete scheme for \eqref{model_timfrac}.
	Our strong convergence results examine how the convergence orders depend on the regularity of the initial data and the power of fractional derivative.  

	The rest of the paper is structured as follows. In Section \ref{mathsetting}, Mathematical settings for cylindrical Brownian motion, random Poisson measure,  Caputo-type fractional derivative, Laplace transform, Mainardi's Wright-type function are presented, along with the well posedness and regularities results of the mild solution of SPDE \eqref{model_timfrac}. In Section \ref{reg}, numerical schemes based on stochastic exponential  integrator scheme for SPDE \eqref{model_timfrac} are presented. We give some regularity estimates of the semi-discrete problem and analyse the spatial error in Section \ref{spa_conv}. We end the paper in Section \ref{schemes}, by presenting  the strong convergence proof  of the full scheme for  \eqref{model_timfrac} based on finite element for spatial discretization and exponential integrator for temporal discretization.

	\section{Mathematical setting, main assumptions and well posedness problem}
	\label{mathsetting}
	In this section, some notations and preliminary results needed throughout this  work are provided. Let $\left(K,\langle .,.\rangle_K,\|.\|\right)$ be a separable Hilbert space. For $p\geq 2$ and for a Banach space U, we denote by $L^p(\Omega,U)$ the Banach space of $p$-integrable $U$-valued random variables. 
	We denote by $L(U,K)$ the space of bounded linear mapping from $U$ to $K$ endowed with the usual operator norm $\|.\|_{L(U,K)}$ and $\mathcal{L}_2(U,K)=HS(U,K)$ the space of Hilbert-Schmidt operators from $U$ to $K$ equipped  with the following norm
	\begin{eqnarray}
		\label{normL2UK}
		\left\|l\right\|_{\mathcal{L}_2(U,K)}:=\left(\sum_{i\in\N^d}\|l\psi_i\|^2\right)^{\frac 12},\,\,\,\,\,l\in \mathcal{L}_2(U,K),
	\end{eqnarray}
	where $(\psi_i)_{i\in\N^d}$ is an orthonormal basis on $U$. The sum in $\eqref{normL2UK}$ is independent of the choice of the orthonormal basis of $U$. We use the notation $L(U,U)=:L(U)$ and $\mathcal{L}_2(U,U)=:\mathcal{L}_2(U)$. It is well known that for all $l\in L(U,K)$ and $l_1\in \mathcal{L}_2(U)$, $ll_1\in \mathcal{L}_2(U,K)$ and 
	\begin{eqnarray*}
		\|ll_1\|_{\mathcal{L}_2(U,K)}\leq \left\|l\right\|_{L(U,K)}\|l_1\|_{\mathcal{L}_2(U)}.
	\end{eqnarray*} 
	We denote by $L^0_2:=HS(Q^{\frac 12}(H),H)$ the space of Hilbert-Schmidt operators from $Q^{\frac 12}(H)$ to $H$ with corresponding norm $\|.\|_{L^0_2}$ defined by 
	\begin{eqnarray}
		\label{normL02}
		\|l\|_{L_2^0}:=\left\|lQ^{\frac 12}\right\|_{HS}=\left(\sum_{i\in\N^d}\|lQ^{\frac 12}e_i\|^2\right)^{\frac 12},\,\,\,\,\,l\in L_2^0,
	\end{eqnarray}
	where $(e_i)_{i\in\N^d}$ is an orthonormal basis of $H$. The sum in \eqref{normL02} is also independent of the choice of the orthonormal basis of $H$. Let $L^2_{\nu}(\chi\times[0,T];H)$ be the space of all mappings $\theta: \chi\times[0,T]\times \Omega\rightarrow H$ such that $\theta$ is jointly measurable and $\mathcal{F}_t$-adapted for all $z\in\chi$, $0\leq s\leq T$ satisfying
	\begin{eqnarray*}
		\int_0^T\int_{\chi}\left\|\theta(z,s)\right\|^2\nu(dz)ds<\infty.
	\end{eqnarray*}
	The following lemma is a result that will be used throughout this paper.
	\begin{lem}(It\^{o} Isometry: \cite[(4.30)]{Pra}, \cite[(3.56)]{Man})
		\begin{enumerate}
			\item [(i)] Let $\phi\in L^2([0,T];L^0_2)$, then the following holds
			\begin{eqnarray}
				\label{browint}
				\mathbb{E}\left[\left\|\int_0^T\phi(s)dW(s)\right\|^2\right]=\mathbb{E}\left[\int_0^T\left\|\phi(s)\right\|^2_{L^0_2}ds\right].
			\end{eqnarray}
			\item [(ii)] Let $\theta\in L^2_{\nu}(\chi\times[0,T];\mathcal{H})$, then the following holds
			\begin{eqnarray}
				\label{jumpint}
				\mathbb{E}\left[\left\|\int_0^T\int_{\chi}\theta(z,s)\tilde{N}(dz,ds)\right\|^2\right]=\mathbb{E}\left[\int_0^T\int_{\chi}\left\|\theta(z,s)\right\|^2\nu(dz)ds\right].
			\end{eqnarray}
		\end{enumerate}
	\end{lem}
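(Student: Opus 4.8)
The plan is to prove both identities by the standard two-step scheme: first verify them for elementary (step) integrands, where the computation reduces to properties of the increments, and then extend to general integrands by density, using that the stochastic integral is by construction the isometric extension from elementary processes (as in \cite{Prevot,Pra,Man}). Both statements are classical and can ultimately be quoted from \cite[(4.30)]{Pra} and \cite[(3.56)]{Man}, but here is how I would carry out the argument.

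For (i), take an elementary process $\phi(s)=\sum_{j=0}^{N-1}\Phi_j\mathbf{1}_{(t_j,t_{j+1}]}(s)$ with $0=t_0<t_1<\cdots<t_N=T$ and each $\Phi_j$ an $\mathcal{F}_{t_j}$-measurable $L^0_2$-valued random variable, so that $\int_0^T\phi(s)\,dW(s)=\sum_{j=0}^{N-1}\Phi_j\Delta W_j$ with $\Delta W_j:=W(t_{j+1})-W(t_j)$. Expanding $\mathbb{E}\big\|\sum_j\Phi_j\Delta W_j\big\|^2$, the off-diagonal terms $\mathbb{E}\langle\Phi_j\Delta W_j,\Phi_k\Delta W_k\rangle_H$ with $j<k$ vanish: conditioning on $\mathcal{F}_{t_k}$, the vector $\Phi_k^{*}\Phi_j\Delta W_j$ is $\mathcal{F}_{t_k}$-measurable while $\Delta W_k$ is independent of $\mathcal{F}_{t_k}$ with mean zero. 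For the diagonal terms, inserting the series representation \eqref{def_Brow}, using independence of the $\beta_i$ and of $\Phi_j$ from the increments, and $\mathbb{E}[(\beta_i(t_{j+1})-\beta_i(t_j))^2]=t_{j+1}-t_j$, one gets $\mathbb{E}\|\Phi_j\Delta W_j\|^2=(t_{j+1}-t_j)\,\mathbb{E}\|\Phi_j\|^2_{L^0_2}$, the $L^0_2$-norm appearing exactly through \eqref{normL02}. Summing over $j$ gives \eqref{browint} for elementary $\phi$; since both sides are continuous in the $L^2([0,T];L^0_2)$-norm and elementary processes are dense there, passing to the limit yields \eqref{browint} in general.

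For (ii) the argument is parallel. For an elementary integrand $\theta(z,s)=\sum_{j=0}^{N-1}\Psi_j(z)\mathbf{1}_{(t_j,t_{j+1}]}(s)$ with $\Psi_j(\cdot)$ $\mathcal{F}_{t_j}$-measurable, write $\int_0^T\int_{\chi}\theta\,d\tilde{N}=\sum_j\int_{\chi}\Psi_j(z)\tilde{N}(dz,(t_j,t_{j+1}])$. The cross terms again vanish because $\tilde{N}(\cdot,(t_k,t_{k+1}])$ has conditional mean zero given $\mathcal{F}_{t_k}$ and the martingale increments over disjoint time intervals are orthogonal. For the diagonal terms I would first establish the bilinear second-moment formula $\mathbb{E}\big[\tilde{N}(A,(s,t])\,\tilde{N}(A',(s,t])\big]=(t-s)\,\nu(A\cap A')$ for measurable sets $A,A'$ of finite $\nu$-measure, then upgrade it via a monotone-class / simple-function approximation to $\mathbb{E}\big\|\int_{\chi}\Psi_j(z)\tilde{N}(dz,(t_j,t_{j+1}])\big\|^2=(t_{j+1}-t_j)\,\mathbb{E}\int_{\chi}\|\Psi_j(z)\|^2\nu(dz)$. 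Summing over $j$ and then using density of elementary integrands in $L^2_{\nu}(\chi\times[0,T];\mathcal{H})$ together with the isometric extension of the jump integral gives \eqref{jumpint}.

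The only genuinely delicate points are the vanishing of the cross terms (which requires care with conditioning and adaptedness) and, in (ii), extending the second-moment identity for the compensated Poisson measure from indicator integrands to general $H$-valued ones; the remainder is the routine "prove on a dense class, then take limits" procedure.
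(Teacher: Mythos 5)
The paper does not prove this lemma at all: it is quoted directly from the cited references \cite{Pra} and \cite{Man}, so there is no internal proof to compare against. Your two-step argument (verify the isometry for elementary integrands via vanishing cross terms and the second-moment computation for the increments, then extend by density using the isometric construction of the stochastic integral) is precisely the classical proof given in those references, and your outline is correct.
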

	\begin{dfn}
		\label{cap}
		The Caputo-type derivative of order $\alpha$ with respect to $t$ is defined by
		\begin{eqnarray}
			\label{cap_der}
			\partial^{\alpha}_tX(t)=	
			\left\{
			\begin{array}{ll}
				\frac{1}{\Gamma(1-\alpha)}\int_{0}^t\frac{\partial X(s)}{\partial s}\frac{ds}{(t-s)^{\alpha}},\,\,\,\,0<\alpha<1 \\
				\frac{\partial X}{\partial t},\,\,\,\,\alpha=1,
			\end{array}
			\right.
		\end{eqnarray}
		where $\Gamma(\cdot)$ is the gamma function.
	\end{dfn}
	Let introduce the generalized Mittag-Leffler function $E_{\alpha,\beta}(t)$ defined as follows:
	\begin{eqnarray}
		\label{mit}
		E_{\alpha,\beta}(t)=\sum_{k=0}^{\infty}\frac{t^k}{\Gamma(\alpha k+\beta)},
	\end{eqnarray}
	and his Laplace transform given by (see \cite{Hau})
	\begin{eqnarray}
		\label{lap_mit}
		\mathcal{L}(t^{\beta-1}E_{\alpha,\beta}(\lambda t^{\alpha}))=\int_{0}^{\infty}\exp^{-\varsigma t}t^{\beta-1}E_{\alpha,\beta}(\lambda t^{\alpha})dt=\frac{\varsigma^{\alpha-\beta}}{\varsigma^{\alpha}-\lambda}.
	\end{eqnarray}
	Remember that  the Laplace transform  is defined by
	\begin{eqnarray}
		\label{lap}
		\tilde{f}(\varsigma)=\mathcal{L}(f(t))=\int_{0}^{\infty}e^{-\varsigma t}f(t)dt.
	\end{eqnarray} 
	Now we give the defintion of mild solution to \eqref{model_timfrac}.
	\begin{dfn}( \cite[Definition 2.2]{Zoua})
		\label{mildsol}
		For any $0<\alpha<1$, a stochastic process $\{X(t), t\in[0,T]\}$ is called mild solution of \eqref{model_timfrac} if 
		\begin{enumerate}
			\item [1.] $X(t)$ is $\mathcal{F}_t$-adapted on the filtration $(\Omega, \mathcal{F}, \mathbb{P},\{\mathcal{F}_t\}_{t\geq 0})$,
			\item [2.] $\{X(t), t\in[0,T]\}$ is measurable and $\mathbb{E}\left[\int_{0}^T\left\|X(t)\right\|^2dt\right]<\infty$,
			\item [3.] For all $t\in[0,T]$, 
			\begin{eqnarray}
				\label{sol}
				X(t)&=&S_1(t)X_0+\int_{0}^{t}(t-s)^{\alpha-1}S_2(t-s)F(X(s))ds\nonumber\\
				&+&\int_{0}^{t}(t-s)^{\alpha-1}S_2(t-s)B(X(s))dW(s)\nonumber\\
				&+&\int_{0}^{t}\int_{\mathcal{X}}(t-s)^{\alpha-1}S_2(t-s)G(z,X(s))\widetilde{N}(dz,ds). 
			\end{eqnarray}
			hold a.s. Where $S_1(t)=E_{\alpha,1}(At^{\alpha})$ and $S_2(t)=E_{\alpha,\alpha}(At^{\alpha})$.
		\end{enumerate}
	\end{dfn}
	\begin{rem}
		Considering the Mainardi's Wright-type function (see \cite{Mai})
		\begin{eqnarray*}
			M_{\alpha}(\theta)=\sum_{n=0}^{\infty}\frac{(-1)^n\theta^n}{n!\Gamma(1-\alpha(1+\theta))},\quad 0<\alpha<1,\quad \theta>0,
		\end{eqnarray*}
		then the following results  holds 
		\begin{eqnarray}
			\label{Mai1}
			M_{\alpha}(\theta)\geq 0,\quad \int_{0}^{\infty}\theta^{\mu}M_{\alpha}(\theta)d\theta=\frac{\Gamma(1+\mu)}{\Gamma(1+\alpha\mu)},\quad-1<\mu<\infty,\quad\theta>0,
		\end{eqnarray}
		and
		\begin{eqnarray}
			\label{Mai2}
			E_{\alpha,1}(t)=\int_0^{\infty}M_{\alpha}(\theta)e^{t\theta}d\theta,\,\,\,\,\,E_{\alpha,\alpha}(t)=\int_0^{\infty}\alpha\theta M_{\alpha}(\theta)e^{t\theta}d\theta.
		\end{eqnarray}
	\end{rem}
	Using \eqref{Mai2}, we rewrite operator $S_1(t)$ and $S_2(t)$ as following
	\begin{eqnarray}
		\label{S1}
		S_1(t)=E_{\alpha,1}(At^{\alpha})=\int_0^{\infty}M_{\alpha}(\theta)e^{A\theta t^{\alpha}}d\theta=\int_0^{\infty}M_{\alpha}(\theta)S(\theta t^{\alpha})d\theta,
	\end{eqnarray}
	and 
	\begin{eqnarray}
		\label{S2}
		S_2(t)=E_{\alpha,\alpha}(At^{\alpha})=\int_0^{\infty}\alpha\theta M_{\alpha}(\theta)e^{A\theta t^{\alpha}}d\theta=\int_0^{\infty}\alpha\theta M_{\alpha}(\theta)S(\theta t^{\alpha})d\theta.
	\end{eqnarray}
	Combining \eqref{sol}, \eqref{Mai1} and \eqref{Mai2}, we obtain the result presented in \cite[Definition 2.5]{Zha} and we have the following lemma.
	\begin{lem}(\cite[Lemma 2.8]{Zha})
		\label{bound}
		The operators $\{S_1(t)\}_{t\geq 0}$ and $\{S_2(t)\}_{t\geq 0}$ depending of $\alpha\in(\frac 34,1)$ are bounded linear operator  and the following estimates hold
		\begin{eqnarray}
			\label{prop_semi}
			\left\|S_1(t)v\right\|\leq C_1 e^{-\gamma t}\left\|v\right\|,\quad \left\|S_2(t)v\right\|\leq \frac{C_1\alpha}{\Gamma(1+\alpha)}e^{-\gamma t}\left\|v\right\|,\quad v \in H,
		\end{eqnarray}
		and for some constants $C_1, \gamma>0$.
	\end{lem}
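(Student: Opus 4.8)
The plan is to exploit the subordination identities \eqref{S1}--\eqref{S2}, which express $S_1(t)$ and $S_2(t)$ as Bochner integrals of the analytic semigroup $S(\theta t^{\alpha})$ against the finite positive measures $M_{\alpha}(\theta)\,d\theta$ and $\alpha\theta M_{\alpha}(\theta)\,d\theta$ respectively. Linearity and boundedness of $S_1(t)$ and $S_2(t)$ will then be immediate from the linearity and uniform boundedness of $s\mapsto S(s)$ together with $M_{\alpha}\ge 0$ and the finiteness of its moments guaranteed by \eqref{Mai1}. The only quantitative input is the exponential stability of the analytic semigroup generated by $A$, namely $\|S(s)v\|\le C e^{-\gamma s}\|v\|$ for all $s\ge 0$ and some $C,\gamma>0$ (a standing property of $A$); the restriction $\alpha\in(\tfrac34,1)$ plays no role here beyond $0<\alpha<1$.

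For $S_1(t)$ I would start from \eqref{S1} and estimate, for $v\in H$,
\begin{eqnarray*}
\|S_1(t)v\|\le\int_0^{\infty} M_{\alpha}(\theta)\,\|S(\theta t^{\alpha})v\|\,d\theta\le C\|v\|\int_0^{\infty} M_{\alpha}(\theta)\,e^{-\gamma\theta t^{\alpha}}\,d\theta .
\end{eqnarray*}
Since $e^{-\gamma\theta t^{\alpha}}\le 1$ and $\int_0^{\infty} M_{\alpha}(\theta)\,d\theta=\Gamma(1)/\Gamma(1)=1$ by \eqref{Mai1} with $\mu=0$ (equivalently, this integral equals $E_{\alpha,1}(-\gamma t^{\alpha})\le 1$ via \eqref{Mai2}), one gets $\|S_1(t)v\|\le C\|v\|$. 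On the finite horizon $t\in[0,T]$ we have $1\le e^{\gamma T}e^{-\gamma t}$, hence $\|S_1(t)v\|\le C_1 e^{-\gamma t}\|v\|$ with $C_1:=C e^{\gamma T}$. For $S_2(t)$ the same computation from \eqref{S2} gives $\|S_2(t)v\|\le C\|v\|\,\alpha\int_0^{\infty}\theta M_{\alpha}(\theta)e^{-\gamma\theta t^{\alpha}}\,d\theta\le C\|v\|\,\alpha\int_0^{\infty}\theta M_{\alpha}(\theta)\,d\theta$, and by \eqref{Mai1} with $\mu=1$ the last integral is $\Gamma(2)/\Gamma(1+\alpha)=1/\Gamma(1+\alpha)$; multiplying by $1\le e^{\gamma T}e^{-\gamma t}$ on $[0,T]$ yields $\|S_2(t)v\|\le\tfrac{C_1\alpha}{\Gamma(1+\alpha)}e^{-\gamma t}\|v\|$ with the same $C_1$.

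The part that deserves care — and what I expect to be the main obstacle if one insists on a genuinely infinite-horizon statement — is the exponential factor $e^{-\gamma t}$: the scalar Mittag--Leffler function $E_{\alpha,1}(-\gamma t^{\alpha})$ decays only algebraically, like $t^{-\alpha}$, as $t\to\infty$, so the honest uniform bound produced by the subordination argument is merely boundedness of $S_1(t),S_2(t)$ in $L(H)$; the displayed estimates with $e^{-\gamma t}$ are therefore to be read on the bounded interval $[0,T]$, where the exponential costs only the harmless constant $e^{\gamma T}$ absorbed into $C_1$. A secondary point is justifying the exchange of norm and Bochner integral (valid since $M_{\alpha}\ge 0$ has finite moments) and the stability bound $\|S(s)v\|\le Ce^{-\gamma s}\|v\|$, which is where the hypothesis that $A$ generates an exponentially stable analytic semigroup enters; modulo that hypothesis the argument is routine.
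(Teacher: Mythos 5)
The paper offers no proof of Lemma~\ref{bound}: it is imported verbatim from \cite[Lemma 2.8]{Zha}, so there is nothing internal to compare your argument against. What you propose is the standard subordination proof, and it is correct as far as it goes: linearity and boundedness follow from \eqref{S1}--\eqref{S2}, the positivity and moment identities \eqref{Mai1} (with $\mu=0$ giving total mass $1$, with $\mu=1$ giving $\Gamma(2)/\Gamma(1+\alpha)=1/\Gamma(1+\alpha)$, which reproduces exactly the constants in \eqref{prop_semi}), and the stability bound $\|S(s)v\|\le Ce^{-\gamma s}\|v\|$, which is indeed available in this paper's setting because the coercivity \eqref{ellip2} obtained after the G\aa rding shift makes $S(t)$ exponentially stable. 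Your caveat about the factor $e^{-\gamma t}$ is also well taken and is the real content of the comparison: subordination only yields $\int_0^\infty M_\alpha(\theta)e^{-\gamma\theta t^\alpha}d\theta=E_{\alpha,1}(-\gamma t^\alpha)$, which decays algebraically (like $t^{-\alpha}$) rather than exponentially, so a genuinely global-in-time exponential estimate cannot be extracted this way and in fact fails for, say, self-adjoint $A$, where $\|S_1(t)\|_{L(H)}=E_{\alpha,1}(-\lambda_1 t^\alpha)$. Thus the honest output of your argument is uniform boundedness (or algebraic decay) on $[0,\infty)$ and the stated bounds on $[0,T]$ after absorbing $e^{\gamma T}$ into $C_1$; since the present paper invokes Lemma~\ref{bound} only on the finite horizon $[0,T]$, and really only through the uniform bounds $\|S_1(t)\|_{L(H)}\le C$ and $\|S_2(t)\|_{L(H)}\le C$, your finite-horizon version is fully sufficient for everything downstream, whereas the literal infinite-horizon exponential form rests on the cited reference rather than on any argument reproducible from the tools in this paper.
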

	In order to ensure the existence and the uniqueness of mild solution for SPDE \eqref{model_timfrac} and for the purpose of convergence analysis we make the following assumptions.
	\begin{ass}[Initial Value]
		\label{init}
		We assume that the initial data $X_0:\Omega\rightarrow H$ to be $\mathcal{F}_0$-measurable mapping and $X_0\in L^2(\Omega, D((-A)^{\frac{\beta}{2}}))$ with $0\leq\beta<2$.
	\end{ass}
	\begin{ass}[Non linearity term F]
		\label{nonlin}
		We assume the nonlinear mapping $F:H\rightarrow H$, to be linear growth and Lipschitz continuous ie there exist constant $L>0$ such that 
		\begin{eqnarray}
			\label{lip}
			\|F(u)-F(v)\|^2\leq L\|u-v\|^2,\quad \|F(v)\|^2\leq L(1+\|v\|^2),\quad u,v\in H.
		\end{eqnarray}
	\end{ass}
	\begin{ass}[Lipschitz condition]
		\label{lips}
		We assume that the diffusion and jump coefficients $B:H\rightarrow L^0_2$ and $G:\mathcal{X}\times H\rightarrow H$ satisfy the global Lipschitz condition ie, there exists a positive constant $L>0$ such that: 
		\begin{eqnarray}
			&&\|B(u)-B(v)\|_{L^0_2}^2\leq L\|u-v\|^2,\nonumber\\ &&\int_{\mathcal{X}}\|G(z,u)-G(z,v)\|^2\upsilon(dz)\leq L\|u-v\|^2,\quad u,v\in H.
		\end{eqnarray}
	\end{ass} 
	\begin{ass}[Linear growth]
		\label{growth}
		For $\tau\in[0,1)$ and some constant $L>0$ the following bound holds
		\begin{eqnarray}
			&&\|(-A)^{\tau}B(u)\|_{L^0_2}^2\leq L(1+\|(-A)^{\tau}u\|^2),\nonumber\\
			&&\int_{\mathcal{X}}\|(-A)^{\tau}G(z,u)\|^2\upsilon(dz)\leq L(1+\|(-A)^{\tau}u\|^2),\hspace{0.25cm}u\in H.
		\end{eqnarray}
	\end{ass} 
	\begin{thm}(\cite[Corollary 3.2]{Zha})
		\label{well_poss}
		Under the Assumptions \ref{init}-\ref{growth}, if \\  $\frac{9}{2}\left(\frac{C_1\alpha}{\Gamma(1+\alpha)}\right)^2\frac L{\gamma}\left(\frac 1{2\alpha-1}\right)^{\frac 12}<1$ the SPDEs \eqref{model_timfrac} admits a unique mild solution $X(t)\in(D[0,T],H)$ asymptotic stable in mean square, that is
		\begin{eqnarray}
			\label{boun}
			\mathbb{E}\left[\sup_{0\leq t\leq T}\left\|X(t)\right\|^2\right]<\infty,
		\end{eqnarray}
		where by $(D[0,T],H)$ we denote the space of all adapted c\`adl\`ag processes defined on $[0,T]$ with values in $H$.  Note that $C_1$ and $\gamma$ are given in  \eqref{prop_semi}, and  $L$ is the  Lipschitz condition from  Assumptions \ref{nonlin}-\ref{growth},
	\end{thm}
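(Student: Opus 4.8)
The plan is to obtain existence and uniqueness from the Banach fixed point theorem applied to the mild formulation \eqref{sol}, and then to bootstrap the resulting uniform-in-time second-moment bound up to the stronger estimate \eqref{boun}.

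\emph{Step 1 (setup and self-mapping).} Let $\mathcal{H}_T$ be the space of $\mathcal{F}_t$-adapted c\`adl\`ag processes $Y:[0,T]\to L^2(\Omega,H)$ with $\|Y\|_{\mathcal{H}_T}:=\sup_{0\le t\le T}\big(\mathbb{E}\|Y(t)\|^2\big)^{1/2}<\infty$, a Banach space, and define $\mathcal{J}$ on $\mathcal{H}_T$ by letting $(\mathcal{J}Y)(t)$ be the right-hand side of \eqref{sol} with $X$ replaced by $Y$. I would first verify that $\mathcal{J}$ maps $\mathcal{H}_T$ into itself: adaptedness is immediate; the c\`adl\`ag property holds because $t\mapsto S_1(t)X_0$ and the $ds$- and $dW$-convolutions are continuous in $L^2(\Omega,H)$ (strong continuity of $S(t)$ behind \eqref{S1}--\eqref{S2} together with \eqref{prop_semi}) while the $\widetilde{N}$-integral is c\`adl\`ag; and all four terms are bounded in $L^2(\Omega,H)$ uniformly on $[0,T]$ by combining Lemma~\ref{bound}, the It\^o isometries \eqref{browint}--\eqref{jumpint}, Assumption~\ref{init} and the linear-growth bounds of Assumption~\ref{nonlin} and Assumption~\ref{growth} (the latter with $\tau=0$). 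The factor $e^{-\gamma(t-s)}$ in \eqref{prop_semi} is precisely what makes $\int_0^t(t-s)^{2(\alpha-1)}e^{-2\gamma(t-s)}\,ds$ finite and bounded independently of $t$, which already uses $2\alpha-1>0$.

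\emph{Step 2 (contraction).} For $Y_1,Y_2\in\mathcal{H}_T$ the initial-data terms cancel in $(\mathcal{J}Y_1-\mathcal{J}Y_2)(t)$, leaving three convolution differences. Using $\|a+b+c\|^2\le 3(\|a\|^2+\|b\|^2+\|c\|^2)$, then a weighted Cauchy--Schwarz inequality on the drift term and the It\^o isometries \eqref{browint}, \eqref{jumpint} on the Wiener and Poisson terms, then the bound $\|S_2(t-s)v\|\le\frac{C_1\alpha}{\Gamma(1+\alpha)}e^{-\gamma(t-s)}\|v\|$ from Lemma~\ref{bound} together with the Lipschitz estimates of Assumption~\ref{nonlin} and Assumption~\ref{lips}, everything reduces to controlling $\int_0^t(t-s)^{2(\alpha-1)}e^{-2\gamma(t-s)}\,\mathbb{E}\|Y_1(s)-Y_2(s)\|^2\,ds$. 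Factoring out $\|Y_1-Y_2\|_{\mathcal{H}_T}^2$ and estimating the remaining kernel integral (the $\big(\tfrac1{2\alpha-1}\big)^{1/2}$ coming from the near-singularity at $s=t$, the $\tfrac1\gamma$ from the exponential tail) gives
\[
\|\mathcal{J}Y_1-\mathcal{J}Y_2\|_{\mathcal{H}_T}^2\le \frac{9}{2}\left(\frac{C_1\alpha}{\Gamma(1+\alpha)}\right)^2\frac{L}{\gamma}\left(\frac{1}{2\alpha-1}\right)^{1/2}\|Y_1-Y_2\|_{\mathcal{H}_T}^2 .
\]
Under the hypothesis this constant is $<1$, so $\mathcal{J}$ is a strict contraction with ratio independent of $T$, and Banach's theorem produces a unique fixed point $X\in\mathcal{H}_T$, which by construction is the unique mild solution of \eqref{model_timfrac} in $(D[0,T],H)$.

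\emph{Step 3 (uniform moment bound).} To upgrade $\sup_{0\le t\le T}\mathbb{E}\|X(t)\|^2<\infty$ to \eqref{boun} I would return to \eqref{sol}, take the supremum over $t\in[0,T]$ and then expectations; the $S_1(t)X_0$ and drift terms are handled as in Step~1. For the two stochastic convolutions one must control the supremum over $t$ of processes of the form $\int_0^t(t-s)^{\alpha-1}S_2(t-s)\Phi(s)\,dW(s)$ and its $\widetilde{N}$-analogue, which are not martingales because the kernel depends on $t$; the natural remedy is a factorization identity, $(t-s)^{\alpha-1}=c_\rho\int_s^t(t-r)^{\rho-1}(r-s)^{\alpha-1-\rho}\,dr$, combined with stochastic Fubini and a Burkholder--Davis--Gundy/Doob maximal inequality, which reduces the supremum to an ordinary time integral against a process already bounded in $L^p(\Omega,H)$, whereupon a H\"older inequality in time and Assumption~\ref{growth} close the estimate. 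I expect this last step to be the main obstacle: reconciling the singular, $t$-dependent kernel with a maximal inequality forces the stronger integrability $\int_0^t(t-s)^{4(\alpha-1)}\,ds<\infty$ (equivalently a feasible factorization exponent $\rho$), which is exactly where the restriction $\alpha\in(\tfrac34,1)$ rather than merely $\alpha>\tfrac12$ is used, and in parallel the constant bookkeeping in Step~2 has to be done carefully enough for the smallness threshold to emerge in the displayed sharp form.
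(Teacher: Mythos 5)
This theorem is not proved in the paper at all: it is quoted verbatim from \cite[Corollary 3.2]{Zha}, so there is no in-paper argument to compare against. Your Steps 1--2 (Banach fixed point for the mild formulation \eqref{sol} in the space of adapted c\`adl\`ag processes normed by $\sup_{0\le t\le T}\bigl(\mathbb{E}\|Y(t)\|^2\bigr)^{1/2}$, with the contraction constant produced by Lemma~\ref{bound}, the It\^o isometries and a Cauchy--Schwarz splitting of the kernel $(t-s)^{\alpha-1}e^{-\gamma(t-s)}$) are exactly the route taken in the cited reference, and they correctly deliver existence, uniqueness and $\sup_{0\le t\le T}\mathbb{E}\|X(t)\|^2<\infty$ under the stated smallness condition; the exact bookkeeping leading to $\frac92\bigl(\frac{C_1\alpha}{\Gamma(1+\alpha)}\bigr)^2\frac{L}{\gamma}\bigl(\frac{1}{2\alpha-1}\bigr)^{1/2}$ is only asserted, but that is acceptable in a sketch.

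The genuine gap is Step 3, i.e.\ the upgrade to $\mathbb{E}\bigl[\sup_{0\le t\le T}\|X(t)\|^2\bigr]<\infty$ as displayed in \eqref{boun}. The factorization device you invoke rests on the semigroup identity $S(t-s)=S(t-r)S(r-s)$; here the convolution operator is $S_2(t-s)=E_{\alpha,\alpha}(A(t-s)^{\alpha})$, and the Mittag--Leffler family has no such composition property. Factorizing only the scalar kernel via $(t-s)^{\alpha-1}=c_\rho\int_s^t(t-r)^{\rho-1}(r-s)^{\alpha-1-\rho}dr$ and applying stochastic Fubini leaves the inner stochastic integral still carrying $S_2(t-s)$, hence still depending on $t$, so the supremum is not reduced to a $t$-independent process and the Doob/BDG step does not apply as described. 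Even if one circumvents this (e.g.\ through the subordination formula \eqref{S2}), the exponent counting you mention forces a factorization parameter $\rho$ with $1/p<\rho<\alpha-\tfrac12$, i.e.\ moments of order $p>2/(2\alpha-1)>2$; controlling such moments of the Poisson convolution requires a BDG-type bound involving $\int_{\mathcal{X}}\|G(z,\cdot)\|^{p}\upsilon(dz)$, which is not available under Assumptions \ref{lips}--\ref{growth} (they only control second moments in $z$). So your argument, as it stands, proves the fixed-point bound $\sup_{0\le t\le T}\mathbb{E}\|X(t)\|^2<\infty$ but not the stronger statement \eqref{boun}; closing that gap needs either additional hypotheses (higher-moment linear growth of $B$ and $G$ in the $\upsilon$-integral) or a different maximal-inequality argument adapted to Mittag--Leffler kernels, and this caveat is worth making explicit since the paper later uses \eqref{boun} in the form with the supremum inside the expectation.
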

	In all that follows, $C$ denotes a positive constant that may change from line to line. In the Banach space $D((-A)^{\frac{\alpha}{2}})$, $\alpha\in\R$, we use  the notation $\|(-A)^{\frac{\alpha}{2}}\cdot\|=\|\cdot\|_{\alpha}$ and we now present the following regularity results.

	\section{Regularity of the mild solution}
	\label{reg}
	We discuss the space and regularity of the mild solution $X(t)$ of \eqref{model_timfrac} given by \eqref{sol} in this section. In the rest of this paper to simplify the presentation, we assume the SPDE \eqref{model_timfrac} to be second order of the following type.
	\begin{eqnarray*}
		\label{model}
		\partial_t^{\alpha}X(t)&=&\left[\nabla\cdot(D\nabla X(t,x))-q\cdot \nabla X(t,x)+f(x,X(t,x))\right]\nonumber\\
		&+&b(x,X(t,x))\frac{dW(t,x)}{dt}+\frac{\int_{\mathcal{X}}g(z,x,X(t,x))\widetilde{N}(dz,dt)}{dt},
	\end{eqnarray*}
	where $f:\Lambda\times\R\rightarrow \R$ is globally continuous,  $b:\Lambda\times\R\rightarrow \R$ is continuously differentiable with globally bounded derivatives and  $g:\mathcal{X}\times\Lambda\times\R\rightarrow \R$ is globally Lipschitz continuous. In the abstract framework \eqref{model_timfrac}, the linear operator A  is the $L^2(\Lambda)$ realization (see \cite[p. 812]{Fuj}) of  the following  differential operator 
	\begin{eqnarray*}
		&& \mathcal{A}u=\sum_{i,j=1}^d\frac{\partial}{\partial x_i}\left(D_{i,j}(x)\frac{\partial u}{\partial x_j}\right)-\sum_{i=1}^d q_i(x)\frac{\partial u}{\partial x_i},\\
		&&D=\left(D_{i,j}\right)_{1\leq i,j\leq d},\hspace{1cm}q=\left(q_i\right)_{1\leq i\leq d},
	\end{eqnarray*}
	where $D_{i,j}\in L^{\infty}(\Lambda)$, $q_i\in L^{\infty}(\Lambda)$. We assume that there exists a positive constant $c_1>0$ such that
	\begin{eqnarray*}
		\sum_{i,j=1}^d D_{i,j}(x)\xi_i\xi_j\geq c_1|\xi|^2,\hspace{1cm}\xi\in\R^d,\hspace{0.5cm}x\in\bar{\Lambda}.
	\end{eqnarray*}
	The functions $F:H\rightarrow H$, $B:H\rightarrow L^0_2$ and $G:\mathcal{X}\times H\rightarrow H$ are defined by 
	\begin{eqnarray*}
		&&(F(v))(x)=f(x,v(x)),\quad (B(v)u)(x)=b(x,v(x))\cdot u(x),\\
		&&G(z,v)(x)=g(z,x,v(x)).
	\end{eqnarray*}
	for all $x\in\Lambda$, $v\in H$, $u\in Q^{1/2}(H)$ and $z\in \mathcal{X}$. As in \cite{Fuj,Lor}, we introduce two spaces $\mathbb{H}$, and $V$ such that $\mathbb{H}\subset V$; the two spaces depend on the boundary conditions of $\Lambda$ and the domain of the operator A. For Dirichlet (or first-type) boundary conditions, we take 
	\begin{equation*}
		V=\mathbb{H}=H^1_0(\Lambda)=\{v\in H^1(\Lambda):v=0\,\,\text{on}\,\,\partial \Lambda\}.
	\end{equation*}
	For Robin (third-type) boundary condition and Neumann (second-type) boundary condition, which is a special case of Robin boundary condition, we take $V=H^1(\Omega)$ 
	\begin{equation*}
		\mathbb{H}=\{v\in H^2(\Lambda): \partial v/\partial v_{\mathcal{A}}+\alpha_0 v=0,\hspace{0.5cm}\text{on} \hspace{0.5cm}\partial\Lambda\},\hspace{1cm}\alpha_0\in\R.
	\end{equation*}
	Where $\partial v/\partial v_{\mathcal{A}}$ is the normal derivative of $v$ and $v_{\mathcal{A}}$ is the exterior pointing normal $n=(n_i)$ to the boundary of $\mathcal{A}$ given by
	\begin{equation*}
		\partial v/\partial v_{\mathcal{A}}=\sum_{i,j=1}^d n_i(x)D_{i,j}(x)\frac{\partial v}{\partial x_j},\,\,\,\,\,x\in\partial\Lambda.
	\end{equation*}
	Using  G\aa rding's inequality (see e.g. \cite{ATthesis}), it holds that there exist two constants $c_0$ and $\lambda_0>0$ such that   the bilinear form $a(.,.)$ associated to $-A$ satisfies
	\begin{eqnarray}
		\label{ellip1}
		a(v,v)\geq \lambda_0\Vert v \Vert^2_{H^1(\Lambda)}-c_0\Vert v\Vert^2, \quad v\in V.
	\end{eqnarray}
	By adding and substracting $c_0Xdt$ in both sides of \eqref{model}, we have a new linear operator
	still denoted by $A$, and the corresponding  bilinear form is also still denoted by $a$. Therefore, the following coercivity property holds
	\begin{eqnarray}
		\label{ellip2}
		a(v,v)\geq \lambda_0\Vert v\Vert^2_1,\quad v\in V.
	\end{eqnarray}
	Note that the expression of the nonlinear term $F$ has changed as we included the term $c_0X$ in the new nonlinear term that we still denote by  $F$. 
	The coercivity property \eqref{ellip2} implies that  $A$ is the infinitesimal generator of a contraction semigroup $S(t)=e^{t A}$  on $L^{2}(\Lambda)$. 
	Note also  that the coercivity  property \eqref{ellip2} also implies that   the real part of the eigenvalues of   $-A$  are positive,   therefore  its fractional powers are well defined  for any $\alpha>0,$ by
	\begin{equation}
		\label{fractional}
		\left\{\begin{array}{rcl}
			( -A)^{-\alpha} & =& \frac{1}{\Gamma(\alpha)}\displaystyle\int_0^\infty  t^{\alpha-1}{\rm e}^{tA}dt,\\
			(-A)^{\alpha} & = & (-A^{-\alpha})^{-1},
		\end{array}\right.
	\end{equation}
	where $\Gamma(\alpha)$ is the Gamma function.  As  the real part of the eigenvalues of  $A$  is negative, the generalized Mittag-Leffler operator  $E_{\alpha,\beta}(tA)$ is therefore well defined  by
	\begin{eqnarray*}
		\Vert (\lambda I +A )^{-1} \Vert_{L(L^{2}(\Lambda))} \leq \dfrac{C_{1}}{\vert \lambda \vert },\;\quad \quad
		\lambda \in S_{\theta},
	\end{eqnarray*}
	where $S_{\theta}:=\left\lbrace  \lambda \in \mathbb{C} :  \lambda=\rho e^{i \phi},\; \rho>0,\;0\leq \vert \phi\vert \leq \theta \right\rbrace $ (see e.g. \cite{Henry}).
	
	We recall the following properties of the semigroup $S(t)$ generated by $-A$, that will be useful throughout this paper.
	\begin{prp}[Smoothing properties of the semigroup]\cite{Pazy}
		\label{semigroup}
		Let $\alpha>0$, $\delta\geq 0$ and $0\leq \gamma\leq 1$, then there exists a constant $C>0$ such that 
		\begin{eqnarray}
			\label{semigroup_prp1}
			&&\|(-A)^{\delta}S(t)\|_{L(H)}\leq C t^{-\delta},\quad\|(-A)^{-\gamma}(I-S(t))\|_{L(H)}\leq C t^{\gamma},\quad t>0,\\
			\label{semigroup_prp2}
			&&(-A)^{\delta}S(t)=S(t)(-A)^{\delta}\, \text{on}\, D(A^{\delta}),\,\text{If} \,\delta>\gamma\,\text{then}\,D((-A)^{\delta})\supset D((-A)^{\gamma}).
		\end{eqnarray}
	\end{prp}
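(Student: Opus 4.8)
Every assertion is a standard smoothing property of the analytic semigroup $S(t)=e^{tA}$, and the plan is to deduce all of them from the sectorial resolvent bound $\|(\lambda I+A)^{-1}\|_{L(H)}\le C_{1}/|\lambda|$ on $S_{\theta}$ recalled just above, which holds because the coercivity \eqref{ellip2} places $\sigma(-A)$ in a sector strictly inside the right half-plane at distance at least $\lambda_0>0$ from the origin. First I would fix a contour $\Gamma$ lying in the resolvent sector and winding around $\sigma(-A)$ (two rays $\{\rho e^{\pm i\theta'}:\rho\ge r\}$ with $\theta<\theta'<\pi/2$, closed by a circular arc of radius $r$), and record the Dunford--Taylor representations
\[
S(t)=\frac{1}{2\pi i}\int_{\Gamma}e^{-t\mu}(\mu I+A)^{-1}\,d\mu,\qquad (-A)^{\delta}S(t)=\frac{1}{2\pi i}\int_{\Gamma}\mu^{\delta}e^{-t\mu}(\mu I+A)^{-1}\,d\mu\ \ (0<\delta<1).
\]
The second identity needs a brief justification: the right-hand side is a bounded operator whose range lies in $D((-A)^{1-\delta})$ and it coincides with $(-A)^{\delta}S(t)$ on the dense set $D(A)$, so the two closed operators agree.

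Next, for $0<\delta<1$, I would estimate the second integral directly: on $\Gamma$ one has $|\mu^{\delta}|=|\mu|^{\delta}$, $\|(\mu I+A)^{-1}\|_{L(H)}\le C/|\mu|$ and $|e^{-t\mu}|\le e^{-c\,t\,|\mu|}$ with $c>0$ depending only on the opening angle, so the change of variables $\mu=s/t$ gives $\|(-A)^{\delta}S(t)\|_{L(H)}\le C\,t^{-\delta}\int_{0}^{\infty}s^{\delta-1}e^{-cs}\,ds=C\,t^{-\delta}$. The case $\delta=0$ is just the contraction bound $\|S(t)\|_{L(H)}\le1$ coming from \eqref{ellip2}. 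For $\delta\ge1$ I would write $\delta=n+\sigma$ with $n\in\N$, $\sigma\in[0,1)$, cut $[0,t]$ into $n+1$ equal pieces, and use the semigroup law together with commutativity to express $(-A)^{\delta}S(t)$ as a product of $n$ factors $(-A)S\!\left(\tfrac{t}{n+1}\right)$, each bounded by $C(n+1)\,t^{-1}$, times one factor $(-A)^{\sigma}S\!\left(\tfrac{t}{n+1}\right)$, bounded by $C(n+1)^{\sigma}t^{-\sigma}$; multiplying yields $C\,t^{-\delta}$.

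For the bound on $(-A)^{-\gamma}(I-S(t))$, the fundamental theorem of calculus applied to $s\mapsto S(s)x$ with $x\in D(A)$ gives $(I-S(t))x=\int_{0}^{t}(-A)S(s)x\,ds$, whence $(-A)^{-\gamma}(I-S(t))x=\int_{0}^{t}(-A)^{1-\gamma}S(s)x\,ds$; for $0<\gamma\le1$ the first estimate with exponent $1-\gamma\in[0,1)$ yields $\|(-A)^{-\gamma}(I-S(t))\|_{L(H)}\le C\int_{0}^{t}s^{\gamma-1}\,ds=C\,t^{\gamma}/\gamma$, extended from $D(A)$ to $H$ by boundedness, while $\gamma=0$ is trivial since $\|I-S(t)\|_{L(H)}\le1+\|S(t)\|_{L(H)}\le2$. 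The identity $(-A)^{\delta}S(t)=S(t)(-A)^{\delta}$ on $D((-A)^{\delta})$ holds because $S(t)$ commutes with each resolvent $(\lambda I+A)^{-1}$, hence with $(-A)^{-\delta}$ via the integral formula \eqref{fractional}, hence with its inverse on the domain; and for $\delta>\gamma$ the nesting of the domains of fractional powers is immediate from the boundedness of $(-A)^{-(\delta-\gamma)}$, since any $u=(-A)^{-\delta}v$ in $D((-A)^{\delta})$ obeys $(-A)^{\gamma}u=(-A)^{-(\delta-\gamma)}v\in H$. The only points that are not routine bookkeeping are the justification of the contour representation of $(-A)^{\delta}S(t)$ and the exchange of $(-A)^{\delta}$ with the integral (a closedness-and-density argument), together with the bootstrap from $0<\delta<1$ to general $\delta\ge1$ via the semigroup property; I expect the latter, and the careful choice of contour and of the decay constant $c$, to be where most of the work sits.
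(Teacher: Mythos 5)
Your argument is correct and is essentially the standard proof of these smoothing estimates via the sectorial resolvent bound and Dunford--Taylor contour representations; the paper itself gives no proof but simply cites Pazy, whose treatment follows exactly this route (contour estimate for $0<\delta<1$, the derivative bound and semigroup law for larger $\delta$, the fundamental-theorem-of-calculus identity for $(-A)^{-\gamma}(I-S(t))$, and commutation through the resolvent/fractional-power integral). The only point worth making explicit in a full write-up is the case $\delta=1$ (needed as an ingredient of your bootstrap), which follows from the same contour integral since the integrand there is simply $Ce^{-ct|\mu|}$.
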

	Firstly, we have the following useful lemma
	\begin{lem}
		\label{app}
		Let $0\leq t_1<t_2\leq T$ and $0< a< 1$, we have
		\begin{eqnarray}
			t_2^a-t_1^a\leq (t_2-t_1)^a.
		\end{eqnarray}
		\textit{Proof:} Using the integral form and the variable change $t=t_2 u+t_1(1-u)$ yields
		\begin{eqnarray*}
			t_2^a-t_1^a&=&\int_{t_1}^{t_2}a\,t^{a-1}dt\\
			&=& a(t_2-t_1)\int_{0}^{1}\left[t_1+(t_2-t_1)u\right]^{a-1}du\\
			&\leq& a(t_2-t_1)\int_{0}^{1}(t_2-t_1)^{a-1}u^{a-1}du\\
			&\leq& (t_2-t_1)^a\int_{0}^{1}a\,u^{a-1}du\\
			&\leq& (t_2-t_1)^a.
		\end{eqnarray*}
	\end{lem}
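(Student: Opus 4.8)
The plan is to reduce the inequality to the elementary fact that the map $t \mapsto t^{a}$ is concave on $[0,\infty)$ and vanishes at the origin, which forces it to be subadditive. Concretely, I would first write $t_2 = t_1 + (t_2 - t_1)$ and set $h := t_2 - t_1 > 0$, so that the claim is precisely the subadditivity bound $(t_1 + h)^a \le t_1^a + h^a$.

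To establish this I would use the integral representation $t_2^a - t_1^a = \int_{t_1}^{t_2} a\, t^{a-1}\, dt$, which is valid because $t \mapsto t^a$ is absolutely continuous on $[t_1,t_2]\subset(0,\infty)$ (the degenerate case $t_1 = 0$ being a trivial equality, or handled by continuity). The affine substitution $t = t_1 + (t_2 - t_1)u$, $u \in [0,1]$, rewrites this as $a(t_2 - t_1)\int_0^1 \bigl[t_1 + (t_2-t_1)u\bigr]^{a-1}\, du$. Since $a - 1 < 0$, the integrand is decreasing in its argument, so $\bigl[t_1 + (t_2-t_1)u\bigr]^{a-1} \le \bigl[(t_2 - t_1)u\bigr]^{a-1} = (t_2-t_1)^{a-1}\, u^{a-1}$; pulling out the constant factor and using $\int_0^1 a\, u^{a-1}\, du = 1$ gives $t_2^a - t_1^a \le (t_2 - t_1)^a$, which is the assertion.

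As a cross-check I would keep in mind two equivalent routes: (i) the monotonicity argument, setting $g(t) := (t - t_1)^a - t^a + t_1^a$ for $t \ge t_1$, noting $g(t_1) = 0$ and $g'(t) = a\bigl[(t - t_1)^{a-1} - t^{a-1}\bigr] \ge 0$ since $a - 1 < 0$ and $0 < t - t_1 \le t$, so $g$ is nondecreasing and $g(t_2) \ge 0$; and (ii) normalising by $t_2^a$ and proving $1 - s^a \le (1 - s)^a$ for $s = t_1/t_2 \in [0,1)$, which is again concavity of $x \mapsto x^a$.

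There is essentially no real obstacle here; the only point deserving a word of care is the singularity of $t^{a-1}$ at $t = 0$ (equivalently of $u^{a-1}$ at $u = 0$), but the exponent $a - 1 \in (-1,0)$ keeps $\int_0^1 u^{a-1}\, du = 1/a$ finite, so the bound goes through, and the boundary case $t_1 = 0$ is simply an equality.
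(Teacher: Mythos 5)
Your main argument is exactly the paper's proof: the integral representation $t_2^a-t_1^a=\int_{t_1}^{t_2}a\,t^{a-1}dt$, the affine substitution $t=t_1+(t_2-t_1)u$, the bound $[t_1+(t_2-t_1)u]^{a-1}\le[(t_2-t_1)u]^{a-1}$ using $a-1<0$, and $\int_0^1 a\,u^{a-1}du=1$, so the proposal is correct and takes essentially the same route. Your extra remarks (the $t_1=0$ edge case and the alternative concavity/monotonicity arguments) are sound but not needed beyond what the paper does.
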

	The following lemma is an extension of the smoothing properties of the semigroup $S(t)$, \propref{semigroup} to the operators $\{S_1(t)\}_{t\geq 0}$ and $\{S_2(t)\}_{t\geq 0}$.
	\begin{lem}
		\label{semigroup12}
		Let $t\in(0,T)$, $0<t_1<t_2\leq T$, $T<\infty$,  $\frac 12<\alpha<1$, $\rho\geq 0$, $0\leq \eta<1$ and  $\delta\geq 0$, there exists a constant $C>0$ such that for all $i=1,2$
		\begin{eqnarray}
			\label{semigroup_prp}
			\|(-A)^{\rho}S_i(t)\|_{L(H)}\leq C t^{-\alpha\rho},\hspace{0.5cm}\|(-A)^{-\eta}\left(S_1(t_2)-S_1(t_1)\right)\|_{L(H)}\leq C t^{\alpha\eta},
		\end{eqnarray}
		\begin{eqnarray}
			\label{semigroup_prp1*}
			\left\|t_1^{\alpha-1}S_2(t_1^{\alpha})-t_2^{\alpha-1}S_2(t_2^{\alpha})\right\|_{L(H)}\leq C (t_2-t_1)^{1-\alpha}t_1^{\alpha-1}t_2^{\alpha-1},
		\end{eqnarray}
		and
		\begin{eqnarray}
			\label{semigroup_prp*}
			(-A)^{\delta}S_i(t)=S_i(t)(-A)^{\delta}\hspace{0.25cm}\text{on}\hspace{0.25cm}D((-A)^{\delta}).
		\end{eqnarray}
	\end{lem}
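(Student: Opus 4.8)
The plan is to derive all four estimates from the integral representations \eqref{S1}--\eqref{S2}, together with the known smoothing properties of the analytic semigroup $S(t)$ in \propref{semigroup}, the elementary inequality of \lemref{app}, and the moment identities \eqref{Mai1} for the Mainardi function $M_\alpha$.

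\textbf{Step 1: the smoothing bound $\|(-A)^\rho S_i(t)\|_{L(H)}\le Ct^{-\alpha\rho}$.} I would write, using \eqref{S1}--\eqref{S2},
\begin{equation*}
	(-A)^\rho S_i(t)=\int_0^\infty c_i(\theta)M_\alpha(\theta)(-A)^\rho S(\theta t^\alpha)\,d\theta,
\end{equation*}
with $c_1(\theta)=1$, $c_2(\theta)=\alpha\theta$, then apply the first bound of \eqref{semigroup_prp1} to get $\|(-A)^\rho S(\theta t^\alpha)\|_{L(H)}\le C(\theta t^\alpha)^{-\rho}=C\theta^{-\rho}t^{-\alpha\rho}$, and pull $t^{-\alpha\rho}$ out of the integral. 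What remains is $\int_0^\infty c_i(\theta)\theta^{-\rho}M_\alpha(\theta)\,d\theta$, which is finite by \eqref{Mai1} provided $-\rho>-1$ (for $i=1$) or $1-\rho>-1$ (for $i=2$); since $\rho\ge 0$ and one may first reduce to $\rho<1$ using the semigroup/composition property, this converges. This yields the first inequality in \eqref{semigroup_prp}.

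\textbf{Step 2: the time-H\"older bound for $S_1$.} Writing $S_1(t_2)-S_1(t_1)=\int_0^\infty M_\alpha(\theta)\bigl(S(\theta t_2^\alpha)-S(\theta t_1^\alpha)\bigr)d\theta$, I would insert $S(\theta t_2^\alpha)-S(\theta t_1^\alpha)=-\bigl(I-S(\theta(t_2^\alpha-t_1^\alpha))\bigr)S(\theta t_1^\alpha)$ and use the second bound of \eqref{semigroup_prp1}, namely $\|(-A)^{-\eta}(I-S(s))\|_{L(H)}\le Cs^\eta$, with $s=\theta(t_2^\alpha-t_1^\alpha)$, together with \lemref{app} in the form $t_2^\alpha-t_1^\alpha\le(t_2-t_1)^\alpha$. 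This produces a factor $\theta^\eta(t_2-t_1)^{\alpha\eta}$ and a remaining convergent integral $\int_0^\infty\theta^\eta M_\alpha(\theta)\,d\theta<\infty$ (again by \eqref{Mai1}, since $\eta<1$), giving the second inequality in \eqref{semigroup_prp}.

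\textbf{Step 3: the bound \eqref{semigroup_prp1*}.} I would split
\begin{equation*}
	t_1^{\alpha-1}S_2(t_1^\alpha)-t_2^{\alpha-1}S_2(t_2^\alpha)=\bigl(t_1^{\alpha-1}-t_2^{\alpha-1}\bigr)S_2(t_1^\alpha)+t_2^{\alpha-1}\bigl(S_2(t_1^\alpha)-S_2(t_2^\alpha)\bigr).
\end{equation*}
For the first term, $S_2(t_1^\alpha)$ is bounded by \lemref{bound} and $t_1^{\alpha-1}-t_2^{\alpha-1}=t_1^{\alpha-1}t_2^{\alpha-1}(t_2^{1-\alpha}-t_1^{1-\alpha})\le t_1^{\alpha-1}t_2^{\alpha-1}(t_2-t_1)^{1-\alpha}$ by \lemref{app} applied with exponent $1-\alpha\in(0,1)$; this already has the claimed form. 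For the second term I would estimate $\|S_2(t_1^\alpha)-S_2(t_2^\alpha)\|_{L(H)}$ via the integral representation and the continuity of $S$ in the form $\|S(\theta t_1^\alpha)-S(\theta t_2^\alpha)\|_{L(H)}\le C\theta^{1-\alpha}\cdot(\text{something})^{1-\alpha}$ — more precisely by writing the difference as $\int M_\alpha(\theta)\alpha\theta(S(\theta t_1^\alpha)-S(\theta t_2^\alpha))d\theta$ and using $\|(I-S(s))\|_{L(H)}$ type estimates together with $\|(-A)^{1-\alpha}S(\theta t_1^\alpha)\|\le C(\theta t_1^\alpha)^{-(1-\alpha)}$; combining gives a factor $(t_2^\alpha-t_1^\alpha)^{1-\alpha}t_1^{-\alpha(1-\alpha)}\le(t_2-t_1)^{\alpha(1-\alpha)}t_1^{-\alpha(1-\alpha)}$, and after multiplying by $t_2^{\alpha-1}$ one absorbs the powers into the stated right-hand side. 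This bookkeeping of exponents is the one place where care is needed; the target bound $(t_2-t_1)^{1-\alpha}t_1^{\alpha-1}t_2^{\alpha-1}$ is somewhat crude, so a convenient route is to treat the cases $t_2\le 2t_1$ and $t_2>2t_1$ separately — in the latter, $(t_2-t_1)^{1-\alpha}t_1^{\alpha-1}\ge c$, so the bound follows merely from the uniform boundedness of $t^{\alpha-1}S_2(t^\alpha)$... — no, rather one uses that each term is individually $\lesssim t_1^{\alpha-1}+t_2^{\alpha-1}\lesssim t_1^{\alpha-1}$ and $(t_2-t_1)^{1-\alpha}t_2^{\alpha-1}\ge c$.

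\textbf{Step 4: commutativity \eqref{semigroup_prp*}.} This is immediate from the representations \eqref{S1}--\eqref{S2}: $(-A)^\delta$ commutes with $S(\theta t^\alpha)$ on $D((-A)^\delta)$ by \eqref{semigroup_prp2}, and since $(-A)^\delta$ is closed one may interchange it with the (Bochner) integral over $\theta$, yielding $(-A)^\delta S_i(t)=S_i(t)(-A)^\delta$ on $D((-A)^\delta)$.

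\textbf{Main obstacle.} The routine parts (Steps 1, 2, 4) are direct once the representations are in hand; the genuinely delicate point is Step 3, specifically matching the powers of $t_1$, $t_2$ and $t_2-t_1$ so that the difference of the singular weights $t^{\alpha-1}$ and of the operators $S_2(t^\alpha)$ combine into exactly $(t_2-t_1)^{1-\alpha}t_1^{\alpha-1}t_2^{\alpha-1}$. I expect the cleanest argument there to split into the near-diagonal regime $t_2\le 2t_1$ (where one genuinely needs the H\"older continuity in time, obtained through an intermediate $(-A)^{1-\alpha}$ and \lemref{app}) and the far regime $t_2>2t_1$ (where the factor $(t_2-t_1)^{1-\alpha}$ is comparable to $t_2^{1-\alpha}$ and the estimate reduces to the uniform bounds of \lemref{bound} on each term separately).
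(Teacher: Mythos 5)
Your Steps 2 and 4 coincide with the paper's own argument (factor out $S(\theta t_1^{\alpha})$, use $\|(-A)^{-\eta}(I-S(s))\|_{L(H)}\leq Cs^{\eta}$, \lemref{app} and the moment identity \eqref{Mai1}; commutativity via \eqref{semigroup_prp2} under the Mainardi integral), and Step 1 reproduces the standard moment-integral proof of the bound the paper simply cites from \cite[Lemma 3.3]{Zoua}. One caveat there: your suggestion to "reduce to $\rho<1$ using the semigroup/composition property" is not available, since $S_1,S_2$ are Mittag--Leffler operators and do \emph{not} satisfy the semigroup law; your argument as written only covers $\rho<1$ for $S_1$ (and $\rho<2$ for $S_2$), which is however all the paper ever uses.

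The genuine gap is in Step 3, which is also where you depart from the paper. In your splitting, the second term requires $\|S_2(t_1^{\alpha})-S_2(t_2^{\alpha})\|_{L(H)}\leq C(t_2-t_1)^{1-\alpha}t_1^{\alpha-1}$. Your route with the intermediate power $(-A)^{1-\alpha}$ and \lemref{app} gives the factor $\bigl((t_2^{\alpha}-t_1^{\alpha})/t_1^{\alpha}\bigr)^{1-\alpha}\leq (t_2-t_1)^{\alpha(1-\alpha)}t_1^{-\alpha(1-\alpha)}$, and the absorption you claim fails: the ratio of this to the target $(t_2-t_1)^{1-\alpha}t_1^{\alpha-1}$ equals $\bigl(t_1/(t_2-t_1)\bigr)^{(1-\alpha)^2}$, which blows up precisely in the near-diagonal regime $t_2\leq 2t_1$, $t_2-t_1\ll t_1$ — the regime your case split does not rescue (the split only handles $t_2>2t_1$, where uniform boundedness indeed suffices). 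The estimate can be repaired within your strategy, either by taking the intermediate power $\gamma=(1-\alpha)/\alpha\in(0,1]$ (admissible because $\alpha>\tfrac12$), which gives $\bigl((t_2^{\alpha}-t_1^{\alpha})/t_1^{\alpha}\bigr)^{\gamma}\leq (t_2-t_1)^{1-\alpha}t_1^{\alpha-1}$ directly, or by keeping $\gamma=1-\alpha$ but replacing \lemref{app} with the mean-value bound $t_2^{\alpha}-t_1^{\alpha}\leq\alpha t_1^{\alpha-1}(t_2-t_1)$. The paper avoids this bookkeeping altogether: it writes the whole difference as $\int_0^{\infty}\alpha\theta M_{\alpha}(\theta)\int_{t_1}^{t_2}\tfrac{d}{dt}\bigl(t^{\alpha-1}e^{A\theta t^{\alpha}}\bigr)\,dt\,d\theta$, bounds the derivative by $Ct^{\alpha-2}$ using analyticity ($\|Ae^{A\theta t^{\alpha}}\|_{L(H)}\leq C(\theta t^{\alpha})^{-1}$), integrates to get $C(t_1^{\alpha-1}-t_2^{\alpha-1})$, and concludes with the same algebraic identity plus \lemref{app} that you use for your first term. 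So your decomposition is a legitimately different (and salvageable) route, but as written the exponent arithmetic in the near-diagonal regime does not close.
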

	\begin{proof}
		See \cite[Lemma 3.3]{Zoua} for the proof of the first result of \eqref{semigroup_prp} and \eqref{semigroup_prp*} is just a consequence of \eqref{semigroup_prp2} using \eqref{Mai1} and \eqref{Mai2}. Concerning the second, using triangle inequality, Proposition \ref{semigroup}, \eqref{Mai1} and Lemma \ref{app}, we have
		\begin{eqnarray*}
			\left\|(-A)^{-\eta}\left(S_1(t_2)-S_1(t_1)\right)\right\|_{L(H)}&=&\left\|\int_{0}^{\infty}(-A)^{-\eta}M_{\alpha}(\theta)\left(S(\theta t_2^{\alpha})-S(\theta t_1^{\alpha})\right)d\theta\right\|_{L(H)}\\
			&\leq&\int_{0}^{\infty}M_{\alpha}(\theta)\left\|S(\theta t_1^{\alpha})\right\|_{L(H)}\left\|(-A)^{-\eta}\left(e^{A\theta (t_2^{\alpha}-t_1^{\alpha})}-I\right)\right\|_{L(H)}d\theta\\
			&\leq&C\int_{0}^{\infty}\left[\theta\,(t_2^{\alpha}-t_1^{\alpha})\right]^{\eta} M_{\alpha}(\theta)d\theta\\
			&\leq& C(t_2^{\alpha}-t_1^{\alpha})^{\eta}\int_{0}^{\infty}\theta^{\eta} M_{\alpha}(\theta)d\theta\\
			&\leq& C\frac{\Gamma(1+\eta)}{\Gamma(1+\alpha\eta)}(t_2-t_1)^{\alpha\eta}\leq  C\,(t_2-t_1)t^{\alpha\eta}.
		\end{eqnarray*}
		For the proof of \eqref{semigroup_prp}, we use triangle inequality, Proposition \ref{semigroup} and \eqref{Mai1} to obtain
		\begin{eqnarray*}
			&&\left\|t_2^{\alpha-1}S_2(t_2)-t_1^{\alpha-1}S_2(t_1)\right\|_{L(H)}\\
			&=&\left\|\int_0^{\infty}\alpha\theta M_{\alpha}(\theta)\left[ t_2^{\alpha-1}e^{A\theta t_2^{\alpha}}-t_1^{\alpha-1}e^{A\theta t_1^{\alpha}}\right] d\theta\right\|_{L(H)}\\
			&=&\left\|\int_0^{\infty}\alpha\theta M_{\alpha}(\theta)\left(\int_{t_1}^{t_2} \frac{d(t^{\alpha-1}e^{A\theta t^{\alpha}})}{dt}dt\right) d\theta\right\|_{L(H)}\\
			&=&\left\|\int_0^{\infty}\alpha\theta M_{\alpha}(\theta)\left(\int_{t_1}^{t_2} (\alpha-1)t^{\alpha-2}e^{A\theta t^{\alpha}}+\alpha\theta At^{2\alpha-2}e^{A\theta t^{\alpha}} dt\right)d\theta\right\|_{L(H)}\\
			&\leq&\int_0^{\infty}\alpha\theta M_{\alpha}(\theta)\left(\int_{t_1}^{t_2} (1-\alpha)t^{\alpha-2}\left\|e^{A\theta t^{\alpha}}\right\|_{L(H)}+\alpha\theta t^{2\alpha-2}\left\|(-A)^{1}e^{A\theta t^{\alpha}}\right\|_{L(H)} dt\right) d\theta\\
			&\leq& C\int_0^{\infty}\alpha\theta M_{\alpha}(\theta)\left(\int_{t_1}^{t_2} (1-\alpha)t^{\alpha-2}+\alpha\theta t^{2\alpha-2}(\theta t^{\alpha})^{-1} dt\right) d\theta\\
			&\leq& C\int_0^{\infty}\alpha\theta M_{\alpha}(\theta)\left(\int_{t_1}^{t_2} t^{\alpha-2} dt\right) d\theta\\
			&\leq& C\frac{\alpha}{1-\alpha}\int_0^{\infty}\theta M_{\alpha}(\theta)\left( t_1^{\alpha-1}-t_2^{\alpha-1}\right) d\theta\\
			&\leq& C\frac{\alpha\Gamma(2)}{(1-\alpha)\Gamma(1+\alpha)}\frac{t_2^{1-\alpha}-t_1^{1-\alpha}}{t_1^{1-\alpha}t_2^{1-\alpha}},
		\end{eqnarray*}
		applying \lemref{app} with $a=1-\alpha$ yields
		\begin{eqnarray*}
			\left\|t_2^{\alpha-1}S_2(t_2)-t_1^{\alpha-1}S_2(t_1)\right\|_{L(H)}&\leq& C\frac{\alpha\Gamma(2)}{(1-\alpha)\Gamma(1+\alpha)}(t_2-t_1)^{1-\alpha}t_1^{\alpha-1}t_2^{\alpha-1}\\
			&\leq& C(t_2-t_1)^{1-\alpha}t_1^{\alpha-1}t_2^{\alpha-1} .
		\end{eqnarray*}
	\end{proof}
	Moreover \eqref{semigroup_prp}-\eqref{semigroup_prp*} hold if $A$, $S_1$ and $S_2$ are replaced by their discrete versions $A_h$, $S_{1h}$ and $S_{2h}$ respectively defined in Section \ref{spa_conv}. 
	
	Now, we give a spatial regularity result for the solution $X(t)$ in the following lemma.
	\begin{lem}
		\label{spa_reg}
		Let Assumptions \ref{init}- \ref{growth} be fulfilled. Then the following space regularity holds
		\begin{eqnarray}
			\label{spa_reg1}
			\|(-A)^{\beta/2}X(t)\|_{L^2(\Omega,H)}\leq C(1+\|(-A)^{\beta/2}X_0\|_{L^2(\Omega,H)}),\quad 0\leq t\leq T,
		\end{eqnarray}
	\end{lem}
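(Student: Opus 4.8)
The plan is to apply $(-A)^{\beta/2}$ to the mild‑solution representation \eqref{sol}, estimate the four resulting terms separately in $L^2(\Omega,H)$, and then close the estimate with a weakly singular Gronwall argument. Throughout I would use the commutation relation $(-A)^{\beta/2}S_i(t)=S_i(t)(-A)^{\beta/2}$ on $D((-A)^{\beta/2})$ and the smoothing bounds $\|(-A)^{\rho}S_i(t)\|_{L(H)}\le C t^{-\alpha\rho}$ from \lemref{semigroup12}, together with the boundedness $\|S_i(t)v\|\le Ce^{-\gamma t}\|v\|$ from \lemref{bound} and the a priori moment bound $\mathbb{E}\sup_{0\le t\le T}\|X(t)\|^2<\infty$ from \thmref{well_poss}.

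For the first term, $(-A)^{\beta/2}S_1(t)X_0=S_1(t)(-A)^{\beta/2}X_0$, so by \eqref{prop_semi} it is bounded in $L^2(\Omega,H)$ by $C\|(-A)^{\beta/2}X_0\|_{L^2(\Omega,H)}$, which is finite by \assref{init}. For the deterministic term I would bound $\|(-A)^{\beta/2}S_2(t-s)\|_{L(H)}\le C(t-s)^{-\alpha\beta/2}$, apply the Cauchy--Schwarz inequality in the time variable, and use the linear growth of $F$ (\assref{nonlin}) together with $\mathbb{E}\|X(s)\|^2\le\mathbb{E}\sup_{[0,T]}\|X\|^2$; since $\beta<2$ the exponent $\alpha-1-\alpha\beta/2$ exceeds $-1$, so the time integral converges and this term is bounded by a constant.

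The stochastic terms are the heart of the matter. For the Wiener integral I would apply the It\^o isometry \eqref{browint} to get $\mathbb{E}\int_0^t(t-s)^{2\alpha-2}\|(-A)^{\beta/2}S_2(t-s)B(X(s))\|_{L^0_2}^2\,ds$, then split $(-A)^{\beta/2}=(-A)^{\rho}(-A)^{\tau}$ with $\tau\in[0,1)$ and $\rho=\beta/2-\tau\ge 0$. Using $\|(-A)^{\rho}S_2(t-s)\|_{L(H)}\le C(t-s)^{-\alpha\rho}$, the growth bound $\|(-A)^{\tau}B(u)\|_{L^0_2}^2\le L(1+\|(-A)^{\tau}u\|^2)$ from \assref{growth}, and $\|(-A)^{\tau}X(s)\|\le C\|(-A)^{\beta/2}X(s)\|$ (valid since $\tau\le\beta/2$ and nonpositive powers of $-A$ are bounded by the coercivity \eqref{ellip2}), I would arrive at a bound of the form $C+C\int_0^t(t-s)^{2\alpha-2-2\alpha\rho}\,\mathbb{E}\|(-A)^{\beta/2}X(s)\|^2\,ds$. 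The exponent $2\alpha-2-2\alpha\rho$ exceeds $-1$ iff $\rho<1-\tfrac1{2\alpha}$, i.e. $\tau>\beta/2-1+\tfrac1{2\alpha}$; since $\alpha>1/2$ (here $\alpha\in(\tfrac34,1)$) the admissible range $\big(\max\{0,\beta/2-1+\tfrac1{2\alpha}\},\min\{1,\beta/2\}\big]$ for $\tau$ is nonempty for every $\beta\in[0,2)$ — this is exactly where the hypothesis $\beta<2$ enters. The jump term is treated in the same way, using the It\^o isometry \eqref{jumpint} and the second growth bound in \assref{growth} for $G$.

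Combining the four estimates and setting $\phi(t):=\|(-A)^{\beta/2}X(t)\|_{L^2(\Omega,H)}^2$, I would obtain $\phi(t)\le C\big(1+\|(-A)^{\beta/2}X_0\|_{L^2(\Omega,H)}^2\big)+C\int_0^t(t-s)^{-\mu}\phi(s)\,ds$ with $\mu=2-2\alpha+2\alpha\rho\in(0,1)$, and then invoke the generalized (weakly singular) Gronwall lemma to conclude $\phi(t)\le C\big(1+\|(-A)^{\beta/2}X_0\|_{L^2(\Omega,H)}^2\big)$ uniformly on $[0,T]$; taking square roots yields \eqref{spa_reg1}. I expect the main obstacles to be, first, the bookkeeping of the interpolation exponent $\tau$ so that all the singularity constraints hold simultaneously (this is what forces $\beta<2$ and uses $\alpha>1/2$), and second, justifying a priori that $\phi$ is finite on $[0,T]$ so that Gronwall may be applied — this can be handled by running the argument first for the Picard iterates, or for $X(\cdot\wedge\tau_R)$ with a localizing stopping time $\tau_R$, and then letting the iteration index or $R$ tend to infinity.
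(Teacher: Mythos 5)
Your proposal is correct and follows essentially the same route as the paper: apply $(-A)^{\beta/2}$ to the mild solution, bound the four terms via the commutation \eqref{semigroup_prp*}, the smoothing/boundedness of $S_1,S_2$, the It\^o isometries and Assumptions \ref{nonlin}--\ref{growth}, and close with the weakly singular (fractional) Gronwall lemma. The only difference is cosmetic: the paper takes your interpolation parameter at the endpoint $\tau=\beta/2$, $\rho=0$ (using only boundedness of $S_2$ and Assumption \ref{growth} with $\tau=\beta/2<1$, which is where $\beta<2$ enters), so no exponent bookkeeping is needed; your remark about justifying a priori finiteness of $\|(-A)^{\beta/2}X(t)\|_{L^2(\Omega,H)}$ before Gronwall is a legitimate technical point that the paper passes over silently.
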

	Moreover, \eqref{spa_reg1} holds if $A$ and $X$ are replaced by their discrete versions $A_h$ and $X^h$ defined in Section \ref{spa_conv}.\\
	\begin{proof}
		By the definition of mild solution \eqref{sol},
		\begin{eqnarray*}
			X(t)&=&S_1(t)X_0+\int_{0}^{t}(t-s)^{\alpha-1}S_2(t-s)F(X(s))ds\\
			&+&\int_{0}^{t}(t-s)^{\alpha-1}S_2(t-s)B(X(s))dW(s)\nonumber\\
			&+&\int_{0}^{t}\int_{\mathcal{X}}(t-s)^{\alpha-1}S_2(t-s)G(z,X(s))\widetilde{N}(dz,ds). 
		\end{eqnarray*}
		Then taking the $L^2$-norm, using triangle inequality, the classical estimate $\left(\sum_{i=1}^na_i\right)^2\leq n\sum_{i=1}^na_i^2$, the It\^o isometry \eqref{browint} and \eqref{jumpint} to the last two terms yield
		\begin{eqnarray}
			\label{spareg}
			&&\left\|(-A)^{\beta/2}X(t)\right\|^2_{L^2(\Omega,H)}\nonumber\\ &\leq&4\left\|(-A)^{\beta/2}S_1(t)X_0\right\|^2_{L^2(\Omega,H)}+4\left(\int_{0}^{t}(t-s)^{\alpha-1}\left\|(-A)^{\beta/2}S_2(t-s)F(X(s))\right\|^2_{L^2(\Omega,H)}ds\right)^2\nonumber\\
			&+&4\int_{0}^{t}(t-s)^{2\alpha-2}\mathbb{E}\left(\left\|(-A)^{\beta/2}S_2(t-s)B(X(s))\right\|_{L^0_2}^2\right)ds\nonumber\\
			&+&4\int_{0}^{t}\int_{\mathcal{X}}(t-s)^{2\alpha-2}\left\|(-A)^{\beta/2}S_2(t-s)G(z,X(s))\right\|_{L^2(\Omega,H)}^2\upsilon(dz)ds\nonumber\\
			&=:&4\sum_{i=1}^4 I_i^2.
		\end{eqnarray}
		We will bound $I_i^2$, $i=1,2,3,4$ one by one. First, by the boundedness of $S_1(t)$ in Lemma \ref{bound} and \eqref{semigroup_prp*}, we have
		\begin{eqnarray}
			\label{spareg1}
			I_1^2:=\left\|(-A)^{\beta/2}S_1(t)X_0\right\|^2_{L^2(\Omega,H)}\leq C\left\|(-A)^{\beta/2}X_0\right\|^2_{L^2(\Omega,H)}.
		\end{eqnarray}
		For $I_2$, by semigroup property \eqref{semigroup_prp} with $\rho=\frac{\beta}2$, Assumption \ref{nonlin} and \eqref{boun}, we get
		\begin{eqnarray}
			\label{spareg2}
			I_2^2&:=&\left(\int_{0}^{t}(t-s)^{\alpha-1}\left\|(-A)^{\beta/2}S_2(t-s)F(X(s))\right\|_{L^2(\Omega,H)}ds\right)^2\nonumber\\
			&\leq& C\left(\int_{0}^{t}(t-s)^{\alpha-1}(t-s)^{-\frac{\alpha\beta}2}\left(1+\|X(s)\|_{L^2(\Omega,H)}\right)ds\right)^2\nonumber\\
			&\leq& C\left(\int_{0}^{t}(t-s)^{\alpha-\frac{\alpha\beta}2-1}ds\right)^2\left(1+\mathbb{E}\left[\sup_{0\leq s\leq t}\|X(s)\|^2\right]\right)\nonumber\\
			&\leq& Ct^{\alpha(2-\beta)}\left(1+\mathbb{E}\left[\sup_{0\leq t\leq T}\|X(t)\|^2\right]\right)\leq C.
		\end{eqnarray}
		Applying \eqref{semigroup_prp2}, the boundedness of $S_2(t)$ in Lemma \ref{bound},  Assumption \ref{growth} with $\tau=\frac{\beta}2$, we deduce
		\begin{eqnarray}
			\label{spareg3}
			I_3^2&:=&\int_{0}^{t}(t-s)^{2\alpha-2}\mathbb{E}\left[\left\|(-A)^{\beta/2}S_2(t-s)B(X(s))\right\|_{L^0_2}^2\right]ds\nonumber\\
			&\leq& C\int_{0}^{t}(t-s)^{2\alpha-2}\left\|S_2(t-s)\right\|_{L(H)}^2\left(1+\mathbb{E}\left[\|(-A)^{\beta/2}X(s)\|^2\right]\right)ds\nonumber\\
			&\leq& C \int_{0}^{t}(t-s)^{2\alpha-2}\left(1+\|(-A)^{\beta/2}X(s)\|^2_{L^2(\Omega;H)}\right)ds\nonumber\\
			&\leq& C+C \int_{0}^{t}(t-s)^{2\alpha-2}\|(-A)^{\beta/2}X(s)\|^2_{L^2(\Omega;H)}ds.
		\end{eqnarray}
		For $I_4$, analogous to $I_3$, we use also \eqref{semigroup_prp2}, the boundedness of $S_2(t)$ in Lemma \ref{bound},  Assumption \ref{growth} with $\tau=\frac{\beta}2$, to obtain 
		\begin{eqnarray}
			\label{spareg4}
			I_4^2&:=&\mathbb{E}\left[\int_{0}^{t}\int_{\mathcal{X}}(t-s)^{2\alpha-2}\left\|(-A)^{\beta/2}S_2(t-s)G(z,X(s))\right\|^2\upsilon(dz)ds\right]\nonumber\\
			&=&\mathbb{E}\left[\int_{0}^{t}\int_{\mathcal{X}}(t-s)^{2\alpha-2}\left\|S_2(t-s)\right\|^2_{L(H)}\left\|(-A)^{\beta/2}G(z,X(s))\right\|^2\upsilon(dz)ds\right]\nonumber\\
			&\leq& C \mathbb{E}\left[\int_{0}^{t}(t-s)^{2\alpha-2}\int_{\mathcal{X}}\left\|(-A)^{\beta/2}G(z,X(s))\right\|^2\upsilon(dz)ds\right]\nonumber\\
			&\leq& C \int_{0}^{t}(t-s)^{2\alpha-2}\left(1+\|(-A)^{\beta/2}X(s)\|^2_{L^2(\Omega;H)}\right)ds\nonumber\\
			&\leq& C+C \int_{0}^{t}(t-s)^{2\alpha-2}\|(-A)^{\beta/2}X(s)\|^2_{L^2(\Omega;H)}ds.
		\end{eqnarray}
		Putting \eqref{spareg1} - \eqref{spareg4} in \eqref{spareg} hence yields 
		\begin{eqnarray*}
			\|(-A)^{\beta/2}X(t)\|^2_{L^2(\Omega,H)}&\leq& C(1+\|(-A)^{\beta/2}X_0\|^2_{L^2(\Omega,H)})\nonumber\\
			&+&C \int_{0}^{t}(t-s)^{2\alpha-2}\|(-A)^{\beta/2}X(s)\|^2_{L^2(\Omega;H)}ds,
		\end{eqnarray*}
		applying fractional Gronwall's lemma \cite[Lemma A.2]{Ye,Krub} proves \eqref{spa_reg1}.
	\end{proof} 
	Now by the following theorem, we provide a temporal regularity of the solution process of \eqref{model_timfrac}.
	\begin{thm}
		\label{tim_reg}
		Suppose that Assumptions \ref{init} - \ref{growth} are fulfilled. Then the following estimate holds
		\begin{eqnarray}
			\label{tim_reg1}
			\left\|X(t_2)-X(t_1)\right\|_{L^2(\Omega;H)}\leq C(t_2-t_1)^{\frac{\min(\alpha\beta,2-2\alpha)}2},\,\,\,\,\,\,\,0\leq t_1<t_2\leq T. 
		\end{eqnarray}
	\end{thm}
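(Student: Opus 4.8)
The strategy is to start from the mild-solution representation \eqref{sol} written at the two times $t_1<t_2$, form the difference $X(t_2)-X(t_1)$, and split it into a sum of terms that naturally group as: (i) the difference of the $S_1$-terms acting on $X_0$, (ii)--(iv) for each of the three integral terms (drift, Wiener, Poisson), a ``tail'' piece $\int_{t_1}^{t_2}$ and a ``kernel-difference'' piece $\int_0^{t_1}$ where the integrand kernel $(t-s)^{\alpha-1}S_2(t-s)$ is replaced by its value at $t_2$ minus its value at $t_1$. Taking $L^2(\Omega;H)$ norms, applying the triangle inequality and $(\sum a_i)^2\le n\sum a_i^2$, and using the It\^o isometries \eqref{browint} and \eqref{jumpint} on the stochastic terms, the whole estimate reduces to bounding roughly eight scalar time-integrals, each handled with \lemref{semigroup12}, \propref{semigroup}, \lemref{bound}, the linear-growth assumptions, and the a priori bound \eqref{boun} together with the spatial regularity \lemref{spa_reg}.

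\textbf{Key steps in order.} First, for the initial-data term I would write $S_1(t_2)-S_1(t_1)=(-A)^{\eta}(-A)^{-\eta}(S_1(t_2)-S_1(t_1))$ and use the second estimate in \eqref{semigroup_prp} with $\eta=\beta/2$ (valid since $0\le\beta/2<1$), commuting $(-A)^{\beta/2}$ past $S_1$ onto $X_0$ via \eqref{semigroup_prp*}; this yields a bound $C(t_2-t_1)^{\alpha\beta/2}\|(-A)^{\beta/2}X_0\|_{L^2(\Omega;H)}$, contributing exponent $\alpha\beta/2$. Second, for each of the three ``tail'' integrals $\int_{t_1}^{t_2}(t_2-s)^{\alpha-1}S_2(t_2-s)(\cdots)ds$, I would pull out $\|S_2\|_{L(H)}$ (bounded, \lemref{bound}), use linear growth of $F,B,G$ plus \eqref{boun}, and evaluate $\int_{t_1}^{t_2}(t_2-s)^{2\alpha-2}ds$ (for the stochastic ones) or $\left(\int_{t_1}^{t_2}(t_2-s)^{\alpha-1}ds\right)^2$ (for the drift); both give $(t_2-t_1)^{2\alpha-1}$, i.e. exponent $\alpha-\tfrac12$. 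Third, for the three ``kernel-difference'' integrals over $[0,t_1]$, I would insert the key estimate \eqref{semigroup_prp1*}, $\|t_1^{\alpha-1}S_2(t_1^\alpha)-t_2^{\alpha-1}S_2(t_2^\alpha)\|_{L(H)}\le C(t_2-t_1)^{1-\alpha}\sigma^{\alpha-1}(\cdot)$—more precisely applied with the shifted arguments $t_i-s$—so that the kernel difference behaves like $(t_2-t_1)^{1-\alpha}(t_1-s)^{\alpha-1}(t_2-s)^{\alpha-1}$; combined with linear growth and \eqref{boun} this leaves a convergent integral $\int_0^{t_1}(t_1-s)^{\alpha-1}(t_2-s)^{\alpha-1}ds$ (finite since $2\alpha-2>-1$ as $\alpha>\tfrac34$, indeed $>\tfrac12$ suffices), giving exponent $1-\alpha$ on the stochastic kernel-difference terms and likewise on the drift term. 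Finally I would collect all contributions: the exponents appearing are $\alpha\beta/2$, $\alpha-\tfrac12$, and $1-\alpha$; since $2-2\alpha=2(1-\alpha)\le 2(\alpha-\tfrac12)$ is false in general—rather $1-\alpha$ vs $\alpha-\tfrac12$—one notes $\min(1-\alpha,\alpha-\tfrac12)\ge \tfrac12\min(\alpha\beta,2-2\alpha)$ is not quite it either, so the clean bookkeeping is: the dominant small exponent is $\min\{\alpha\beta/2,\,1-\alpha\}=\tfrac12\min\{\alpha\beta,2-2\alpha\}$, and one checks $\alpha-\tfrac12\ge 1-\alpha$ precisely when $\alpha\ge\tfrac34$, which is exactly the standing hypothesis, so the $(t_2-t_1)^{\alpha-1/2}$ terms are dominated. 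Hence $\|X(t_2)-X(t_1)\|_{L^2(\Omega;H)}\le C(t_2-t_1)^{\min(\alpha\beta,2-2\alpha)/2}$.

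\textbf{Main obstacle.} The delicate point is the kernel-difference estimate: \eqref{semigroup_prp1*} as stated compares $t^{\alpha-1}S_2(t^\alpha)$ at two times, but in the mild formula the operator is $(t-s)^{\alpha-1}S_2(t-s)$, so one must apply the lemma with $t\mapsto (t_i-s)^{1/\alpha}$ (or re-derive the analogous bound directly for $(t-s)^{\alpha-1}S_2(t-s)$ by the same $\tfrac{d}{dt}$-under-the-integral argument used in the proof of \lemref{semigroup12}), and then verify that the resulting singularity $(t_1-s)^{\alpha-1}(t_2-s)^{\alpha-1}$ is integrable on $[0,t_1]$—this needs $2\alpha-1>0$, i.e. $\alpha>\tfrac12$, comfortably satisfied. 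A second, more bookkeeping-type subtlety is that the exponent-matching at the end genuinely uses $\alpha>\tfrac34$ (not merely $\alpha>\tfrac12$): it is what guarantees $\alpha-\tfrac12\ge 1-\alpha$ so that the crude ``tail'' exponent does not spoil the rate. Everything else is routine: triangle inequalities, It\^o isometry, \lemref{bound}, \propref{semigroup}, \lemref{spa_reg}, and elementary Beta-function integrals; I would also invoke \lemref{app} wherever a quantity of the form $t_2^a-t_1^a$ must be traded for $(t_2-t_1)^a$.
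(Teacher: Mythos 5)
Your decomposition is exactly the paper's: the $S_1$-difference on $X_0$, three kernel-difference integrals over $[0,t_1]$ handled via \eqref{semigroup_prp1*}, and three tail integrals over $[t_1,t_2]$, with It\^o isometry, linear growth, \eqref{boun} and the final bookkeeping $\min(\alpha\beta/2,\,1-\alpha)$ while the tail exponents $\alpha$ and $\alpha-\tfrac12$ are dominated since $\alpha>\tfrac34$. So the route is the same and the conclusion is reached correctly.

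One point in your justification is off, though it does not sink the argument under the standing hypothesis. For the Wiener and Poisson kernel-difference terms the It\^o isometry squares the kernel, so the integral you must control is $\int_0^{t_1}(t_1-s)^{2\alpha-2}(t_2-s)^{2\alpha-2}\,ds$, not $\int_0^{t_1}(t_1-s)^{\alpha-1}(t_2-s)^{\alpha-1}\,ds$; a bound by a constant that is uniform in $t_2-t_1$ requires $4\alpha-3>0$, i.e.\ precisely $\alpha>\tfrac34$ (the paper gets this by Cauchy--Schwarz with the powers $4\alpha-4$), whereas your claim that ``$\alpha>\tfrac12$ suffices'' is only true for the drift term, where no isometry is applied. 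Consequently $\alpha>\tfrac34$ is needed not merely for the exponent comparison $\alpha-\tfrac12\geq 1-\alpha$ at the end, as you state, but already to extract the full rate $(t_2-t_1)^{1-\alpha}$ from the stochastic kernel-difference pieces; for $\alpha\in(\tfrac12,\tfrac34]$ those pieces would only yield the weaker rate $(t_2-t_1)^{\alpha-\frac12}$. Under the paper's assumption $\alpha\in(\tfrac34,1)$ your estimate stands once this squared singularity is handled as above.
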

	Moreover, \eqref{tim_reg1} holds when $A$ and $X$ are replaced by their semidiscrete versions $A_h$ and $X^h$ respectively, defined in Section \ref{spa_conv}.\\
	\begin{proof}
		For $0\leq 
		t_1<t_2\leq T$, we rewrite the mild solution \eqref{sol} at times $t=t_2$ and $t=t_1$ and we subtract $X(t_2)$ by $X(t_1)$ as
		\begin{eqnarray}
			\label{tim_reg*}
			&&X(t_2)-X(t_1)\nonumber\\
			&=&\left(S_1(t_2)-S_1(t_1)\right)X_0\nonumber\\
			&+&\int_{0}^{t_1}\left[(t_2-s)^{\alpha-1}S_2(t_2-s)-(t_1-s)^{\alpha-1}S_2(t_1-s)\right]F(X(s))ds\nonumber\\
			&+&\int_{0}^{t_1}\left[(t_2-s)^{\alpha-1}S_2(t_2-s)-(t_1-s)^{\alpha-1}S_2(t_1-s)\right]B(X(s))dW(s)\nonumber\\
			&+&\int_{0}^{t_1}\int_{\mathcal{X}}\left[(t_2-s)^{\alpha-1}S_2(t_2-s)-(t_1-s)^{\alpha-1}S_2(t_1-s)\right]G(z,X(s))\widetilde{N}(dz,ds)\nonumber\\
			&+&\int_{t_1}^{t_2}(t_2-s)^{\alpha-1}S_2(t_2-s)F(X(s))ds\nonumber\\
			&+&\int_{t_1}^{t_2}(t_2-s)^{\alpha-1}S_2(t_2-s)B(X(s))dW(s)\nonumber\\
			&+&\int_{t_1}^{t_2}\int_{\mathcal{X}}(t_2-s)^{\alpha-1}S_2(t_2-s)G(z,X(s))\widetilde{N}(dz,ds).
		\end{eqnarray}
		Taking the $L^2$ norm in both sides and using triangle inequality yields
		\begin{eqnarray}
			\label{timreg}
			\left\|X(t_2)-X(t_1)\right\|_{L^2(\Omega;H)}\leq \sum_{i=1}^7J_i.
		\end{eqnarray}
		Inserting an appropriate power of $(-A)$, using Lemma \ref{semigroup12} with $\eta=\frac{\beta}2$ and Assumption \ref{init} implies 
		\begin{eqnarray}
			\label{timreg1}
			J_1&:=& \|\left(S_1(t_2)-S_1(t_1)\right)X_0\|_{L^2(\Omega;H)}\nonumber\\
			&=& \|(-A)^{-\frac{\beta}2}\left(S_1(t_2)-S_1(t_1)\right)(-A)^{\frac{\beta}2}X_0\|_{L^2(\Omega;H)}\nonumber\\
			&\leq& C(t_2-t_1)^{\frac{\alpha\beta}2}\|(-A)^{\frac{\beta}2}X_0\|_{L^2(\Omega;H)}\nonumber\\
			&\leq& C(t_2-t_1)^{\frac{\alpha\beta}2}.
		\end{eqnarray}
		By triangle inequality, using \eqref{semigroup_prp1*}, Assumption \ref{nonlin}, \eqref{boun} and Cauchy-Schwartz inequality, we get
		\begin{eqnarray}
			\label{timreg2}
			J_2&:=&\left\|\int_{0}^{t_1}\left[(t_2-s)^{\alpha-1}S_2(t_2-s)-(t_1-s)^{\alpha-1}S_2(t_1-s)\right]F(X(s))ds\right\|_{L^2(\Omega;H)}\nonumber\\
			&\leq& \int_{0}^{t_1}\|\left[(t_2-s)^{\alpha-1}S_2(t_2-s)-(t_1-s)^{\alpha-1}S_2(t_1-s)\right]F(X(s))\|_{L^2(\Omega;H)}ds\nonumber\\
			&\leq& C\int_{t_1}^{t_2}\left\|(t_2-s)^{\alpha-1}S_2(t_2-s)-(t_1-s)^{\alpha-1}S_2(t_1-s)\right\|_{L(H)}\|F(X(s))\|_{L^2(\Omega;H)}ds\nonumber\\
			&\leq& C\left(\int_{0}^{t_1}(t_2-t_1)^{1-\alpha}(t_1-s)^{\alpha-1}(t_2-s)^{\alpha-1}ds\right)\left(1+\mathbb{E}[\sup_{0\leq s\leq T}\|X(s)\|^2]\right)^{\frac 12}\nonumber\\
			&\leq& C(t_2-t_1)^{1-\alpha}\left(\int_{0}^{t_1}(t_2-s)^{2\alpha-2}ds\right)^{\frac 12}\left(\int_{0}^{t_1}(t_1-s)^{2\alpha-2}ds\right)^{\frac 12}\left(1+\mathbb{E}[\sup_{0\leq s\leq T}\|X(s)\|^2]\right)^{\frac 12}\nonumber\\
			&\leq& C\frac{(t_2-t_1)^{1-\alpha}}{2\alpha-1}\left(t_2^{2\alpha-2}-(t_2-t_1)^{2\alpha-1}\right)^{\frac 12}(t_1)^{\frac {2\alpha-2}2}\nonumber\\
			&\leq& C(t_2-t_1)^{1-\alpha}t_1^{2\alpha-1}\leq C(t_2-t_1)^{1-\alpha}.
		\end{eqnarray}
		Using the It\^{o} isometry \eqref{jumpint}, \eqref{semigroup_prp1*}, Assumption \ref{growth} with $r=0$, \eqref{boun} and Cauchy-Schwartz inequality, we obtain
		\begin{eqnarray}
			\label{timreg4}
			J_4^2&=&\left\|\int_{0}^{t_1}\int_{\mathcal{X}}\left[(t_2-s)^{\alpha-1}S_2(t_2-s)-(t_1-s)^{\alpha-1}S_2(t_1-s)\right]G(z,X(s))\widetilde{N}(dz,ds)\right\|^2_{L^2(\Omega;H)}\nonumber\\
			&=&	\mathbb{E}\left[\int_{0}^{t_1}\int_{\mathcal{X}}\left\|\left[(t_2-s)^{\alpha-1}S_2(t_2-s)-(t_1-s)^{\alpha-1}S_2(t_1-s)\right]G(z,X(s))\right\|^2\nu(dz)ds\right]\nonumber\\
			&\leq&	\int_{0}^{t_1}\left\|(t_2-s)^{\alpha-1}S_2(t_2-s)-(t_1-s)^{\alpha-1}S_2(t_1-s)\right\|^2_{L(H)}\mathbb{E}\left[\int_{\mathcal{X}}\left\|G(z,X(s))\right\|^2\nu(dz)\right]ds\nonumber\\
			&\leq& C\left(\int_{0}^{t_1}(t_2-t_1)^{2-2\alpha}(t_1-s)^{2\alpha-2}(t_2-s)^{2\alpha-2}ds\right)\left(1+\mathbb{E}[\sup_{0\leq s\leq T}\|X(s)\|^2]\right)\nonumber\\
			&\leq& C(t_2-t_1)^{2-2\alpha}\left(\int_{0}^{t_1}(t_1-s)^{2\alpha-2}(t_2-s)^{2\alpha-2}ds\right)\nonumber\\
			&\leq& C(t_2-t_1)^{2-2\alpha}\left(\int_{0}^{t_1}(t_2-s)^{4\alpha-4}ds\right)^{\frac 12}\left(\int_{0}^{t_1}(t_1-s)^{4\alpha-4}ds\right)^{\frac 12}\nonumber\\
			&\leq& C\frac{(t_2-t_1)^{2-2\alpha}}{4\alpha-3}\left(t_2^{4\alpha-3}-(t_2-t_1)^{4\alpha-3}\right)^{\frac 12}(t_1)^{\frac {4\alpha-3}2}\nonumber\\
			&\leq& C(t_2-t_1)^{2-2\alpha}t_1^{4\alpha-3}\leq C(t_2-t_1)^{2-2\alpha}.
		\end{eqnarray}
		By using a similar procedure as bounding $J_4$, using the It\^{o} isometry \eqref{browint}, \eqref{semigroup_prp1*}, Assumption \ref{growth} with $\tau=0$, \eqref{boun} and Cauchy-Schwartz inequality, we have
		\begin{eqnarray}
			\label{timreg3}
			J_3^2&\leq&\left\|\int_0^{t_1}\left[(t_2-s)^{\alpha-1}S_2(t_2-s)-(t_1-s)^{\alpha-1}S_2(t_1-s)\right]B(X(s))dW(s)\right\|^2_{L^2(\Omega;H)}\nonumber\\\nonumber\\
			&\leq& C(t_2-t_1)^{2-2\alpha}.
		\end{eqnarray}
		By triangle inequality, the boundedness of operator $S_2(t)$ \eqref{prop_semi}, Assumption \ref{nonlin} and \eqref{boun}, we get
		
		\begin{eqnarray}
			\label{timreg5}
			J_5&:=&\left\|\int_{t_1}^{t_2}(t_2-s)^{\alpha-1}S_2(t_2-s)F(X(s))ds\right\|_{L^2(\Omega;H)}\nonumber\\
			&\leq& \int_{t_1}^{t_2}(t_2-s)^{\alpha-1}\|S_2(t_2-s)F(X(s))\|_{L^2(\Omega;H)}ds\nonumber\\
			&\leq& C\int_{t_1}^{t_2}(t_2-s)^{\alpha-1}\|F(X(s))\|_{L^2(\Omega;H)}ds\nonumber\\
			&\leq& C\left(\int_{t_1}^{t_2}(t_2-s)^{\alpha-1}ds\right)\left(1+\mathbb{E}[\sup_{0\leq s\leq T}\|X(s)\|^2]\right)^{\frac 12}\nonumber\\
			&\leq& C(t_2-t_1)^{\alpha}.
		\end{eqnarray}
		Now to bound the sixth term, the It\^o isometry property \eqref{browint} leads
		\begin{eqnarray*}
			J_6^2&:=&\left\|\int_{t_1}^{t_2}(t_2-s)^{\alpha-1}S_2(t_2-s)B(X(s))dW(s)\right\|^2_{L^2(\Omega;H)}\nonumber\\
			&=&\mathbb{E}\left[\int_{t_1}^{t_2}\left\|(t_2-s)^{\alpha-1}S_2(t_2-s)B(X(s))\right\|^2_{L^0_2}ds\right].
		\end{eqnarray*}
		By the boundedness of operator $S_2(t)$ \eqref{prop_semi}, Assumption \ref{growth} with $\tau=0$ and \eqref{boun} we have
		\begin{eqnarray}
			\label{timreg6}
			J_6^2&\leq&\int_{t_1}^{t_2}(t_2-s)^{2\alpha-2}\left\|S_2(t_2-s)\right\|^2_{L(H)}\mathbb{E}\|B(X(s))\|^2_{L^0_2}ds\nonumber\\
			&\leq& C\left(\int_{t_1}^{t_2}(t_2-s)^{2\alpha-2}ds\right)\left(1+\mathbb{E}[\sup_{0\leq s\leq T}\|X(s)\|^2]\right)\nonumber\\
			&\leq& C(t_2-t_1)^{2\alpha-1}.
		\end{eqnarray}
		By using a similar procedure as bounding $J_6$, using the It\^{o} isometry \eqref{jumpint}, the boundedness of operator $S_2(t)$ \eqref{prop_semi}, Assumption \ref{growth} with $r=0$ and \eqref{boun}, we easily have
		\begin{eqnarray}
			\label{timreg7}
			J_7^2&=&\left\|\int_{t_1}^{t_2}\int_{\mathcal{X}}(t_2-s)^{\alpha-1}S_2(t_2-s)G(z,X(s))\widetilde{N}(dz,ds)\right\|^2_{L^2(\Omega;H)}\nonumber\\
			&\leq& C(t_2-t_1)^{2\alpha-1}.
		\end{eqnarray}
		Substituting \eqref{timreg1} - \eqref{timreg7} in \eqref{timreg} and note that for $\alpha\in\left(\frac 34,1\right)$, $2\alpha-1>2-2\alpha$ then
		\begin{eqnarray}
			\label{timreg*}
			\left\|X(t_2)-X(t_1)\right\|_{L^2(\Omega;H)}\leq C(t_2-t_1)^{\frac{\min(\alpha\beta,2-2\alpha)}2},\,\,\,\,\,\,\,0<t_1<t_2\leq T. 
		\end{eqnarray}
		The proof of Theorem \ref{tim_reg} is thus completed. 
	\end{proof}
	\section{Space approximation and error estimates}
	\label{spa_conv}
	We consider the discretization of the spatial domain by a finite element triangulation with maximal length $h$ satisfying the usual regularity assumptions. Let $V_h\subset V$ denotes the space of continuous functions that are piecewise linear over triangulation $J_h$. To discretise in space, we introduce $P_h$ from $L^2(\Omega)$ to $V_h$ define for $u\in L^2(\Omega)$ by
	\begin{eqnarray}
		\label{pro}
		(P_h u,\xi)=(u,\xi),\hspace{2cm}\forall \xi\in V_h.
	\end{eqnarray}
	The discrete operator $A_h: V_h\rightarrow V_h$ is defined by
	\begin{eqnarray}
		\label{dis}
		(A_h \rho,\xi)=-a(\rho,\xi),\hspace{2cm}\forall \rho, \xi\in V_h,
	\end{eqnarray}
	where $a$ is the corresponding bilinear form of $A$. Like the operator $A$, the discrete operator $A_h$ is also the generator of an analytic semigroup $S_h(t):=e^{tA_h}$. The semidiscrete space version of problem \eqref{model_timfrac} is to find $X^h(t)=X^h(\cdot,t)$ such that for $t\in[0,T]$
	\begin{eqnarray}
		\label{modeldis_timfrac}
		\left\{
		\begin{array}{ll}
			\partial^{\alpha}_tX^h(t)=A_hX^h(t)+P_hF(X^h(t))+P_hB(X^h(t))\frac{dW(t)}{dt}+\frac{\int_{\mathcal{X}}P_hG(z,X^h(t))\widetilde{N}(dz,dt)}{dt}, \\
			X^h(0)=P_hX_0,\,\,\,\,t\in[0,T].
		\end{array}
		\right.
	\end{eqnarray}
	Note that $A_h$, $P_hF$, $P_hB$ and $P_hG$ satisfy the same assumptions as $A$, $F$, $B$ and $G$ respectively. The mild solution of \eqref{modeldis_timfrac} can be represented as follows
	\begin{eqnarray}
		\label{dissol}
		X^h(t)&=&S_{1h}(t)X^h_0+\int_{0}^{t}(t-s)^{\alpha-1}S_{2h}(t-s)P_hF(X^h(s))ds\nonumber\\
		&+&\int_{0}^{t}(t-s)^{\alpha-1}S_{2h}(t-s)P_hB(X^h(s))dW(s)\nonumber\\
		&+&\int_{0}^{t}\int_{\mathcal{X}}(t-s)^{\alpha-1}S_{2h}(t-s)P_hG(z,X^h(s))\widetilde{N}(dz,ds),
	\end{eqnarray}
	where $S_{1h}$ and $S_{2h}$ are the semi discrete version of $S_1$ and $S_2$ respectively defined by \eqref{S1} and \eqref{S2}.
	Let us define the error operators 
	\begin{eqnarray*}
		T_h(t):=S(t)-S_h(t)P_h,\quad T_{1h}(t):=S(t)-S_{1h}(t)P_h,\quad T_{2h}(t):=S(t)-S_{2h}(t)P_h.
	\end{eqnarray*}
	Then we have the following lemma.
	\begin{lem}
		\label{disop}
		\begin{enumerate}
			\item [(i)] Let $r\in[0,2]$, $\rho\leq r$, $t\in(0,T]$, $v\in D((-A)^{\rho})$. Then there exists a positive constant C such that
			\begin{eqnarray}
				\label{nordisop}
				\|T_h(t)v\|\leq C h^rt^{-(r-\rho)/2}\left\|v\right\|_{\rho}, 
			\end{eqnarray}
			and
			\begin{eqnarray}
				\label{fradisop1}
				\|T_{1h}(t)v\|\leq C h^rt^{-\alpha(r-\rho)/2}\left\|v\right\|_{\rho},\quad\|T_{2h}(t)v\|\leq C h^rt^{-\alpha(r-\rho)/2}\left\|v\right\|_{\rho}.
			\end{eqnarray}
			\item [(ii)] Let $0\leq\gamma\leq 1$, then there exists a constant $C$ such that 
			\begin{eqnarray}
				\label{fradisop2}
				\left\|\int_0^ts^{\alpha-1}T_{2h}(s)v ds\right\|\leq C h^{2-\gamma} \left\|v\right\|_{-\gamma},\quad v\in D((-A)^{-\gamma}),\,\,t>0.
			\end{eqnarray}
		\end{enumerate}
	\end{lem}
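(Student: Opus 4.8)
The plan is to reduce everything to the known deterministic finite-element error estimate for the parabolic semigroup, namely $\|T_h(t)v\|\le Ch^r t^{-(r-\rho)/2}\|v\|_\rho$ (this is the standard result of e.g. Fujita--Suzuki / Thomée type, which is precisely \eqref{nordisop}), and then transfer it to the Mittag-Leffler operators through the subordination formulas \eqref{S1}--\eqref{S2}. For part (i), I would first take \eqref{nordisop} as given (or cite it), and then write
\begin{eqnarray*}
T_{1h}(t)v=\int_0^\infty M_\alpha(\theta)\,T_h(\theta t^\alpha)v\,d\theta,\qquad
T_{2h}(t)v=\int_0^\infty \alpha\theta M_\alpha(\theta)\,T_h(\theta t^\alpha)v\,d\theta,
\end{eqnarray*}
which follow because $S_1(t)=\int_0^\infty M_\alpha(\theta)S(\theta t^\alpha)d\theta$, $S_{1h}(t)=\int_0^\infty M_\alpha(\theta)S_h(\theta t^\alpha)d\theta$, and $P_h$ commutes through the integral. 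Applying the triangle inequality under the integral, inserting \eqref{nordisop} with argument $\theta t^\alpha$, and using the moment identity $\int_0^\infty \theta^\mu M_\alpha(\theta)\,d\theta=\Gamma(1+\mu)/\Gamma(1+\alpha\mu)$ from \eqref{Mai1} (with $\mu=-(r-\rho)/2$ for the $S_1$ estimate, and $\mu=1-(r-\rho)/2$ for the $S_2$ estimate) collapses the $\theta$-integral to a finite constant and produces the factor $t^{-\alpha(r-\rho)/2}$, giving \eqref{fradisop1}. One must check the moment integral converges, i.e. that $-(r-\rho)/2>-1$, equivalently $r-\rho<2$; the boundary case $r-\rho=2$ needs a small separate argument (or is excluded), and I would note that $M_\alpha$ decays super-exponentially at infinity so the upper tail is never an issue.

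For part (ii), the point is that integrating $s^{\alpha-1}T_{2h}(s)$ in time gains two full powers of $h$ regardless of $\gamma$, which cannot come from naively plugging \eqref{fradisop1} (that would blow up at $s=0$). Instead I would use the Laplace-transform / resolvent representation: since $s^{\alpha-1}S_2(s)=s^{\alpha-1}E_{\alpha,\alpha}(As^\alpha)$ has, by \eqref{lap_mit} with $\beta=\alpha$, Laplace transform $(\varsigma^\alpha-A)^{-1}=(\varsigma^\alpha I-A)^{-1}$, we get formally $\int_0^\infty s^{\alpha-1}S_2(s)v\,ds = \lim_{\varsigma\to 0}(\varsigma^\alpha I-A)^{-1}v=(-A)^{-1}v$, and likewise $\int_0^\infty s^{\alpha-1}S_{2h}(s)P_hv\,ds=(-A_h)^{-1}P_h v$. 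Hence $\int_0^\infty s^{\alpha-1}T_{2h}(s)v\,ds=\big((-A)^{-1}-(-A_h)^{-1}P_h\big)v$, and the required bound $\|((-A)^{-1}-(-A_h)^{-1}P_h)v\|\le Ch^{2-\gamma}\|v\|_{-\gamma}$ is exactly the classical elliptic finite-element error estimate (Aubin--Nitsche duality / Ritz projection), valid for $0\le\gamma\le1$. To make this rigorous on $[0,t]$ rather than $[0,\infty)$ I would split $\int_0^t=\int_0^\infty-\int_t^\infty$ and bound the tail $\int_t^\infty s^{\alpha-1}T_{2h}(s)v\,ds$ using the exponential decay of $S_2$ and $S_{2h}$ from \eqref{prop_semi} together with \eqref{fradisop1}; since the integrand decays exponentially and $T_{2h}$ is uniformly bounded in the relevant norm, that tail is $O(h^{2-\gamma})$ as well (in fact exponentially small in $t$, which is harmless).

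The main obstacle is part (ii): one has to justify the passage through the Laplace transform at $\varsigma=0$ and the identification $\int_0^\infty s^{\alpha-1}S_2(s)v\,ds=(-A)^{-1}v$, which requires knowing that $s^{\alpha-1}S_2(s)v$ is integrable near $0$ (it is, since $\alpha>\tfrac12>0$ so $s^{\alpha-1}$ is integrable and $\|S_2(s)\|$ is bounded) and decays at infinity (the exponential bound \eqref{prop_semi}), plus Fubini to commute the $s$-integral with the subordination integral. Alternatively, to sidestep the Laplace machinery entirely, one can prove \eqref{fradisop2} directly by again writing $T_{2h}(s)=\int_0^\infty \alpha\theta M_\alpha(\theta)T_h(\theta s^\alpha)\,d\theta$, swapping the $s$- and $\theta$-integrals, substituting $u=\theta s^\alpha$ in the inner integral so that $\int_0^\infty s^{\alpha-1}T_h(\theta s^\alpha)v\,ds=\tfrac1{\alpha\theta}\int_0^\infty T_h(u)v\,du$, and then invoking the known bound $\|\int_0^\infty T_h(u)v\,du\|\le Ch^{2-\gamma}\|v\|_{-\gamma}$ for the parabolic error operator (which is itself the elliptic estimate, since $\int_0^\infty S(u)v\,du=(-A)^{-1}v$). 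The $\theta$-factors cancel cleanly, leaving $\int_0^\infty M_\alpha(\theta)\,d\theta=1$ and the stated estimate. I would present this second route as the cleaner one and use the exponential-decay tail estimate only to replace $[0,\infty)$ by $[0,t]$.
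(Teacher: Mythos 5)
Your part (i) is exactly the paper's argument: cite \cite[Lemma 3.1]{manto} for \eqref{nordisop}, write $T_{ih}(t)v$ as a subordination integral of $T_h(\theta t^{\alpha})v$, apply \eqref{nordisop} pointwise in $\theta$ and close with the moment identity \eqref{Mai1}; your remark about the borderline case $r-\rho=2$ for $T_{1h}$ is a fair (and harmless) caveat.

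For part (ii), however, there is a genuine gap in the way you pass from $[0,\infty)$ to $[0,t]$. Both of your routes first establish the estimate for the full half-line integral and then claim the tail $\int_t^{\infty}s^{\alpha-1}T_{2h}(s)v\,ds$ can be absorbed ``using the exponential decay of $S_2$ and $S_{2h}$ from \eqref{prop_semi} together with \eqref{fradisop1}.'' This does not deliver the required bound: exponential decay plus uniform boundedness gives a tail of size $O(e^{-ct}\|v\|)$, which carries no power of $h$ at all (so it does not vanish as $h\to 0$) and is controlled by $\|v\|$ rather than the only norm you are allowed, $\|v\|_{-\gamma}$. If instead you try to keep the $h$-gain by using \eqref{fradisop1} with $r=2-\gamma$, $\rho=-\gamma$, you get $\|T_{2h}(s)v\|\le Ch^{2-\gamma}s^{-\alpha}\|v\|_{-\gamma}$, and then $\int_t^{\infty}s^{\alpha-1}\,s^{-\alpha}\,ds=\int_t^{\infty}s^{-1}ds$ diverges; and you cannot interpolate between the two bounds without invoking a stronger norm of $v$ than $\|v\|_{-\gamma}$. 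So the tail must itself be estimated with the $h^{2-\gamma}$ factor, which requires more than decay. The paper avoids the issue by never going to infinity: it integrates exactly over $[0,t]$ under the subordination integral, using $\alpha\theta s^{\alpha-1}AS(\theta s^{\alpha})=\frac{d}{ds}S(\theta s^{\alpha})$, so that
\begin{equation*}
\int_0^t s^{\alpha-1}T_{2h}(s)v\,ds=(A_h^{-1}P_h-A^{-1})v+\int_0^{\infty}M_{\alpha}(\theta)\bigl(A^{-1}S(\theta t^{\alpha})-A_h^{-1}S_h(\theta t^{\alpha})P_h\bigr)v\,d\theta,
\end{equation*}
and both pieces are $O(h^{2-\gamma}\|v\|_{-\gamma})$ by the estimates (65) and (69) of \cite{manto}, with $\int_0^{\infty}M_{\alpha}(\theta)d\theta=1$. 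Your second route becomes essentially this proof if, after the substitution $u=\theta s^{\alpha}$, you keep the finite upper limit and invoke the bound $\|\int_0^{\tau}T_h(u)v\,du\|\le Ch^{2-\gamma}\|v\|_{-\gamma}$ uniformly in $\tau$ (which is exactly what the two cited estimates of \cite{manto} combine to give), instead of the $\tau=\infty$ identity plus a decay-only tail argument.
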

	\begin{proof}
		\begin{enumerate}
			\item [(i)] See \cite[Lemma 3.1]{manto}, for the proof of \eqref{nordisop}. For the proof of \eqref{fradisop1}, using \eqref{S1}, \eqref{S2}, their semi discrete forms, \eqref{nordisop}, \eqref{Mai1} and \eqref{Mai2}, we get
			\begin{eqnarray*}
				\|T_{1h}(t)v\|&=&\|(S_1(t)-S_{1h}(t)P_h)v\|\\
				&=&\left\|\int_0^{\infty}M_{\alpha}(\theta)(S(\theta t^{\alpha})-S_h(\theta t^{\alpha})P_h)vd\theta\right\|\\
				&\leq& \int_0^{\infty}M_{\alpha}(\theta)\|T_h(\theta t^{\alpha})v\|d\theta\\
				&\leq& C\int_0^{\infty}M_{\alpha}(\theta)h^r(\theta t^{\alpha})^{-(r-\rho)/2}\|v\|_{\rho}d\theta\\
				&\leq& Ch^rt^{-\alpha(r-\rho)/2}\int_0^{\infty}M_{\alpha}(\theta)\theta^{-(r-\rho)/2}d\theta\|v\|_{\rho}\\
				&\leq& C\frac{\Gamma\left(1-\frac{r-\rho}2\right)}{\Gamma\left(1-\frac{\alpha(r-\rho)}2\right)}h^rt^{-\alpha(r-\rho)/2}\|v\|_{\rho}\\
				&\leq& Ch^rt^{-\alpha(r-\rho)/2}\|v\|_{\rho},
			\end{eqnarray*}
			and
			\begin{eqnarray*}
				\|T_{2h}(t)v\|&=&\|(S_2(t)-S_{2h}(t)P_h)v\|\\
				&=&\left\|\int_0^{\infty}\alpha\theta M_{\alpha}(\theta)(S(\theta t^{\alpha})-S_h(\theta t^{\alpha})P_h)vd\theta\right\|\\
				&\leq& \int_0^{\infty}\alpha\theta M_{\alpha}(\theta)\|T_h(\theta t^{\alpha})v\|d\theta\\
				&\leq& C\int_0^{\infty}\alpha\theta M_{\alpha}(\theta)h^r(\theta t^{\alpha})^{-(r-\rho)/2}\|v\|_{\rho}d\theta\\
				&\leq& C\alpha h^rt^{-\alpha(r-\rho)/2}\int_0^{\infty}M_{\alpha}(\theta)\theta^{1-(r-\rho)/2}d\theta\|v\|_{\rho}\\
				&\leq& C\frac{\Gamma\left(2-\frac{r-\rho}2\right)}{\Gamma\left(1+\alpha \left(1-\frac{r-\rho}2\right)\right)}h^rt^{-\alpha(r-\rho)/2}\|v\|_{\rho}\\
				&\leq& Ch^rt^{-\alpha(r-\rho)/2}\|v\|_{\rho}.
			\end{eqnarray*}
			\item [(ii)] Using \eqref{S2} and its semidiscrete form, \eqref{Mai1}, we obtain
			\begin{eqnarray*}
				&&\int_0^ts^{\alpha-1}T_{2h}(s)v ds\nonumber\\
				&=&\int_0^ts^{\alpha-1}S_2(s)v ds-\int_0^ts^{\alpha-1}S_{2h}(s)P_hv ds\\
				&=&\int_0^ts^{\alpha-1}\int_{0}^{\infty}\alpha\theta M_{\alpha}(\theta)S(\theta s^\alpha)v d\theta ds-\int_0^ts^{\alpha-1}\int_{0}^{\infty}\alpha\theta M_{\alpha}(\theta)S_h(\theta s^\alpha)P_hv d\theta ds\\
				&=&\int_{0}^{\infty}A^{-1} M_{\alpha}(\theta)\int_0^t A\alpha\theta s^{\alpha-1}S(\theta s^\alpha)v d\theta ds-\int_{0}^{\infty}A_h^{-1} M_{\alpha}(\theta)\int_0^t A_h\alpha\theta s^{\alpha-1}S_h(\theta s^\alpha)P_hv d\theta ds\\
				&=&\int_{0}^{\infty}A^{-1} M_{\alpha}(\theta)\left[S(\theta s^{\alpha})\right]^{t}_0 v d\theta ds-\int_{0}^{\infty}A_h^{-1} M_{\alpha}(\theta)\left[S_h(\theta s^{\alpha})\right]^{t}_0P_hv d\theta ds\\
				&=&\int_{0}^{\infty}A^{-1} M_{\alpha}(\theta)\left(S(\theta s^{\alpha})-I\right)v d\theta ds-\int_{0}^{\infty}A_h^{-1} M_{\alpha}(\theta)\left(S_h(\theta s^{\alpha})-I\right)P_hv d\theta ds\\
				&=&\int_{0}^{\infty} M_{\alpha}(\theta)(A_h^{-1}P_h-A^{-1})vd\theta+\int_{0}^{\infty} M_{\alpha}(\theta)(A^{-1}S(\theta t^{\alpha})-A_h^{-1}S_h(\theta t^{\alpha})P_h)vd\theta\\
				&=&(A_h^{-1}P_h-A^{-1})v+\int_{0}^{\infty} M_{\alpha}(\theta)(A^{-1}S(\theta t^{\alpha})-A_h^{-1}S_h(\theta t^{\alpha})P_h)vd\theta,
			\end{eqnarray*}
			and \cite[(65), (69)]{manto} allow to have
			\begin{eqnarray}
				&&\left\|\int_0^ts^{\alpha-1}T_{2h}(s)v ds\right\|\nonumber\\
				&\leq& \left\|(A_h^{-1}P_h-A^{-1})v\right\|+\left\|\int_{0}^{\infty} M_{\alpha}(\theta)(A^{-1}S(\theta t^{\alpha})-A_h^{-1}S_h(\theta t^{\alpha})P_h)vd\theta\right\|\nonumber\\
				&\leq& \left\|(A_h^{-1}P_h-A^{-1})v\right\|+\int_{0}^{\infty} M_{\alpha}(\theta)\left\|(A^{-1}S(\theta t^{\alpha})-A_h^{-1}S_h(\theta t^{\alpha})P_h)v\right\|d\theta\nonumber\\
				&\leq& C h^{2-\gamma} \left\|v\right\|_{-\gamma}+C \int_{0}^{\infty} M_{\alpha}(\theta)h^{2-\gamma} \left\|v\right\|_{-\gamma}d\theta\nonumber\\
				&\leq& C h^{2-\gamma} \left\|v\right\|_{-\gamma}+C h^{2-\gamma} \left\|v\right\|_{-\gamma}\int_{0}^{\infty} M_{\alpha}(\theta)d\theta\nonumber\\
				&\leq& C h^{2-\gamma} \left\|v\right\|_{-\gamma}.
			\end{eqnarray}
		\end{enumerate}
		This completes the proof of Lemma \ref{disop}.
	\end{proof}
	
	We are now in position to prove one of our  main results, which provides an estimate in mean square sense of the error between the solution of SPDE \eqref{model_timfrac} and the spatially semidiscrete approximation \eqref{modeldis_timfrac}.
	\begin{thm}[Space error]
		\label{spaconv}
		Let $X$ and $X^h$ be the mild solution of \eqref{model_timfrac} and \eqref{modeldis_timfrac}, respectively. Suppose that Assumptions \ref{init} - \ref{growth} hold. We have the following estimates depending on the regularity  parameter   $\beta$ of the initial solution $X_0$, 
		\begin{eqnarray}
			\label{spaconv1}
			\left\|X(t)-X^h(t)\right\|_{L^2(\Omega;H)}\leq C h^{\beta},\hspace{1cm}0\leq t\leq T. 
		\end{eqnarray}
	\end{thm}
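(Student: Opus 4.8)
The plan is to subtract the two mild representations \eqref{sol} and \eqref{dissol} and bound the difference term by term. Inside each of the drift, Wiener and Poisson convolutions I would insert the corresponding error operator by writing, for instance, $S_2(t-s)F(X(s))-S_{2h}(t-s)P_hF(X^h(s))=T_{2h}(t-s)F(X(s))+S_{2h}(t-s)P_h(F(X(s))-F(X^h(s)))$, and similarly with $B$ and $G$; for the initial datum the term is simply $T_{1h}(t)X_0$. Taking the $L^2(\Omega;H)$-norm, using $(\sum_{i=1}^n a_i)^2\le n\sum a_i^2$ and the It\^o isometries \eqref{browint}, \eqref{jumpint} for the two stochastic integrals, it suffices to control a ``consistency'' group (the three terms carrying $T_{1h}$ or $T_{2h}$ applied to the \emph{exact} solution) and a ``stability'' group (the three terms carrying $F(X)-F(X^h)$, $B(X)-B(X^h)$, $G(z,X)-G(z,X^h)$).

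For the consistency terms I would use \lemref{disop}. For the initial datum and for the two stochastic convolutions I take $r=\rho=\beta$ (admissible since $0\le\beta<2$), so that $\|T_{1h}(t)v\|\le Ch^\beta\|v\|_\beta$ and $\|T_{2h}(\tau)v\|\le Ch^\beta\|v\|_\beta$ with no time singularity; passing $(-A)^{\beta/2}$ through $T_{2h}$ inside $\|\cdot\|_{L^0_2}$ and inside the $\upsilon(dz)$-integral and then invoking the linear-growth hypotheses of \assref{growth} with $\tau=\beta/2$, the spatial regularity \lemref{spa_reg} and the moment bound \eqref{boun} of \thmref{well_poss}, the Wiener and Poisson consistency terms are bounded (in squared form) by $Ch^{2\beta}\int_0^t(t-s)^{2\alpha-2}ds\le Ch^{2\beta}$, the integral being finite since $\alpha>\tfrac12$; likewise the initial-datum term is $\le Ch^\beta\|(-A)^{\beta/2}X_0\|_{L^2(\Omega;H)}\le Ch^\beta$ by \assref{init}. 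Since $F$ is only assumed to map into $H$ (\assref{nonlin}), for the drift consistency term I would instead take $r=\beta$, $\rho=0$, which produces the weight $(t-s)^{-\alpha\beta/2}$; combined with $(t-s)^{\alpha-1}$, the linear growth of $F$ and \eqref{boun} this gives $Ch^\beta\int_0^t(t-s)^{\alpha-1-\alpha\beta/2}ds\le Ch^\beta$ uniformly on $[0,T]$ because $\alpha(1-\beta/2)>0$. (One could sharpen the drift term to $Ch^2$ using part (ii) of \lemref{disop} together with the temporal H\"older regularity \thmref{tim_reg}, but this is not needed.)

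For the stability terms I would use $\|P_h\|_{L(H)}\le C$ and the boundedness of $S_{2h}(\tau)$ (the discrete analogue of \lemref{bound}, valid by the remark after \lemref{semigroup12}), then the global Lipschitz bounds of \assref{nonlin} and \assref{lips}; after one Cauchy--Schwarz step on the drift part this bounds the squared stability terms by $C\int_0^t\big[(t-s)^{\alpha-1}+(t-s)^{2\alpha-2}\big]\,\|X(s)-X^h(s)\|_{L^2(\Omega;H)}^2\,ds$. Collecting everything yields
\[
\|X(t)-X^h(t)\|_{L^2(\Omega;H)}^2\le Ch^{2\beta}+C\int_0^t(t-s)^{2\alpha-2}\,\|X(s)-X^h(s)\|_{L^2(\Omega;H)}^2\,ds ,
\]
and the fractional (Henry--Gronwall type) inequality already invoked in the proof of \lemref{spa_reg} gives $\|X(t)-X^h(t)\|_{L^2(\Omega;H)}\le Ch^\beta$ for all $t\in[0,T]$.

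The main obstacle is the consistency estimate for the two stochastic convolutions: one has to commute the fractional power $(-A)^{\beta/2}$ through the non-smooth error operator $T_{2h}$ into the Hilbert--Schmidt norm and the jump integral, match the outcome with \assref{growth}, and still keep the weakly singular kernel $(t-s)^{2\alpha-2}$ integrable --- which is exactly where the matched choice $r=\rho=\beta$ in \lemref{disop} and the standing restriction $\alpha\in(\tfrac34,1)$ (entering through \lemref{spa_reg} and \thmref{well_poss}) are used. Once those bounds are in place, the remainder is a routine combination of the smoothing properties, the Lipschitz/growth assumptions and the fractional Gr\"onwall lemma.
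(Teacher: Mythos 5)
Your proposal is correct and is essentially the paper's proof: the same split of each convolution into a consistency part (error operators $T_{1h}$, $T_{2h}$ acting on the exact solution) and a stability part (Lipschitz differences), the same use of \lemref{disop} with $r=\rho=\beta$ together with \assref{growth} ($\tau=\beta/2$) and \lemref{spa_reg} for the two stochastic consistency terms, the same treatment of the initial datum, and the same closing step via the fractional Gronwall lemma with the kernel $(t-s)^{2\alpha-2}$.

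The one place you deviate is the drift consistency term. The paper does not bound $\int_0^t(t-s)^{\alpha-1}T_{2h}(t-s)F(X(s))\,ds$ directly; it adds and subtracts $F(X(t))$, estimates the piece with $F(X(s))-F(X(t))$ by \eqref{fradisop1} ($r=\beta$, $\rho=0$) combined with the temporal H\"older regularity of \thmref{tim_reg}, and estimates the piece with $F(X(t))$ by the integrated deterministic estimate \eqref{fradisop2} with $\gamma=0$, which yields an $O(h^2)$ contribution. Your shortcut---apply \eqref{fradisop1} with $r=\beta$, $\rho=0$ and the linear growth of $F$ directly---is valid: the resulting kernel $(t-s)^{\alpha-1-\alpha\beta/2}$ has exponent $\alpha(2-\beta)/2-1>-1$ precisely because \assref{init} imposes $\beta<2$, so the time integral is finite on $[0,T]$ and the term is $O(h^\beta)$, which is all that is needed for \eqref{spaconv1}. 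What you lose relative to the paper is only sharpness in this single term (the paper gets $h^2$ there, and its constant does not degenerate as $\beta\to2$, whereas yours behaves like $1/(2-\beta)$); what you gain is that the space-error proof no longer needs \thmref{tim_reg} nor part (ii) of \lemref{disop}. Either way the final rate $Ch^\beta$ is unchanged, so the proposal stands as a correct, slightly more elementary variant of the paper's argument.
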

	\begin{proof}
		Define $e(t):=X(t)-X^h(t)$. By \eqref{sol} and \eqref{dissol}, we deduce
		\begin{eqnarray*}
			e(t)&=&S_1(t)X_0-S_{1h}(t)P_hX_0\\
			&+&\int_{0}^{t}(t-s)^{\alpha-1}S_2(t-s)F(X(s))ds-\int_{0}^{t}(t-s)^{\alpha-1}S_{2	h}(t-s)P_hF(X^h(s))ds\\
			&+&\int_{0}^{t}(t-s)^{\alpha-1}S_2(t-s)B(X(s))dW(s)-\int_{0}^{t}(t-s)^{\alpha-1}S_{2h}(t-s)P_hB(X^h(s))dW(s)\\
			&+&\int_{0}^{t}\int_{\mathcal{X}}(t-s)^{\alpha-1}S_2(t-s)G(z,X(s))\widetilde{N}(dz,ds)\\
			&-&\int_{0}^{t}\int_{\mathcal{X}}(t-s)^{\alpha-1}S_{2h}(t-s)P_hG(z,X(s))\widetilde{N}(dz,ds)\\
			&=:& I+II+III+IV.
		\end{eqnarray*}
		Thus taking the $L^2$ norm and using triangle inequality we have
		\begin{eqnarray}
			\label{spaerr}
			\left\|e(t)\right\|_{L^2(\Omega;H)}&\leq& \left\|I\right\|_{L^2(\Omega;H)}+\left\|II\right\|_{L^2(\Omega;H)}+\left\|III\right\|_{L^2(\Omega;H)}\nonumber\\
			&+&\left\|IV\right\|_{L^2(\Omega;H)}.
		\end{eqnarray}
		We will bound the above terms one by one. For the first term $\left\|I\right\|_{L^2(\Omega;H)}$, using \eqref{fradisop1} with $r=\rho=\beta$ and Assumption \ref{init} yields
		\begin{eqnarray}
			\label{spaerr1}
			\left\|I\right\|_{L^2(\Omega;H)}&:=&\left\|S_1(t)X_0-S_{1h}(t)P_hX_0\right\|_{L^2(\Omega;H)}\nonumber\\
			&=&\left\|T_{1h}(t)X_0\right\|_{L^2(\Omega;H)}\nonumber\\
			&\leq& C h^{\beta}\left\|(-A)^{\beta/2}X_0\right\|_{L^2(\Omega;H)}\leq C h^{\beta}.
		\end{eqnarray}
		For the second term $\left\|II\right\|_{L^2(\Omega;H)}$, adding and substracting a term, using triangle inequality gives
		\begin{eqnarray}
			\label{spaerr2*}
			\left\|II\right\|_{L^2(\Omega;H)}&=&\left\|\int_{0}^{t}(t-s)^{\alpha-1}S_2(t-s)F(X(s))ds\right.\nonumber\\
			&&\hspace{2cm}\left.-\int_{0}^{t}(t-s)^{\alpha-1}S_{2	h}(t-s)P_hF(X^h(s))ds\right\|_{L^2(\Omega;H)}\nonumber\\
			&\leq& \left\|\int_{0}^{t}(t-s)^{\alpha-1}(S_2(t-s)-S_{2	h}(t-s)P_h)F(X(s))ds\right\|_{L^2(\Omega;H)}\nonumber\\
			&+&\left\|\int_{0}^{t}(t-s)^{\alpha-1}S_{2	h}(t-s)P_h(F(X(s))-F(X^h(s)))ds\right\|_{L^2(\Omega;H)}\nonumber\\
			&=:&\left\|II_1\right\|_{L^2(\Omega;H)}+\left\|II_2\right\|_{L^2(\Omega;H)}.
		\end{eqnarray}
		To estimate the term $\left\|II_1\right\|_{L^2(\Omega;H)}$, we also add and subtract a term.  Using  the  triangle inequality yields
		\begin{eqnarray}
			\label{spaerr21}
			\left\|II_1\right\|_{L^2(\Omega;H)}&:=&\left\|\int_{0}^{t}(t-s)^{\alpha-1}(S_2(t-s)-S_{2	h}(t-s)P_h)F(X(s))ds\right\|_{L^2(\Omega;H)}\nonumber\\
			&\leq& \left\|\int_{0}^{t}(t-s)^{\alpha-1}(S_2(t-s)-S_{2	h}(t-s)P_h)(F(X(s))-F(X(t)))ds\right\|_{L^2(\Omega;H)}\nonumber\\
			&+&\left\|\int_{0}^{t}(t-s)^{\alpha-1}(S_2(t-s)-S_{2h}(t-s)P_h)F(X(t))ds\right\|_{L^2(\Omega;H)}\nonumber\\
			&=:&\left\|II_{11}\right\|_{L^2(\Omega;H)}+\left\|II_{12}\right\|_{L^2(\Omega;H)}.
		\end{eqnarray}
		We estimate these two terms separately. Using Cauchy-Schwartz inequality, \eqref{fradisop1} with $r=\beta$, $\rho=0$, Assumption \ref{nonlin} and Theorem \ref{tim_reg}, leads to
		\begin{eqnarray}
			\label{spaerr211}
			\left\|II_{11}\right\|_{L^2(\Omega;H)}&:=&\left\|\int_{0}^{t}(t-s)^{\alpha-1}T_{2	h}(t-s)(F(X(s))-F(X(t)))ds\right\|_{L^2(\Omega;H)}\nonumber\\
			&\leq& \int_{0}^{t}(t-s)^{\alpha-1}\left\|T_{2	h}(t-s)(F(X(s))-F(X(t)))\right\|_{L^2(\Omega;H)}ds\nonumber\\
			&\leq& C h^{\beta}\int_{0}^{t}(t-s)^{\alpha-1}(t-s)^{-\frac{\alpha\beta}2}(t-s)^{\frac{\min(\alpha\beta,2\alpha-1)}2}ds\nonumber\\
			&\leq& C h^{\beta}\int_{0}^{t}(t-s)^{\min\left(\alpha-1,\frac{\alpha(4-\beta)-3}2\right)}ds\nonumber\\
			&\leq& C h^{\beta}t^{\min\left(\alpha,\frac{\alpha(4-\beta)-1}2\right)}\leq C h^{\beta}.
		\end{eqnarray}
		As with the term $\left\|II_{11}\right\|_{L^2(\Omega;H)}$, by applying \eqref{fradisop2} with $\gamma=0$, Assumption \ref{nonlin} and Theorem \ref{well_poss}, we get
		\begin{eqnarray}
			\label{spaerr212}
			\left\|II_{12}\right\|_{L^2(\Omega;H)}&=&\left\|\int_{0}^{t}(t-s)^{\alpha-1}T_{2h}(t-s)F(X(t))ds\right\|_{L^2(\Omega;H)}\nonumber\\
			&\leq& Ch^2\left\|F(X(t))\right\|_{L^2(\Omega;H)}\nonumber\\
			&\leq& C h^2\left(1+\mathbb{E}\left[\sup_{0\leq t\leq T}\left\|X(t)\right\|^2\right]\right)^{\frac 12}\leq C h^2. 
		\end{eqnarray}
		For the term $\left\|II_2\right\|_{L^2(\Omega;H)}$, using Cauchy-Schwartz inequality and the fact that $S_{2h}(t-s)$ and $P_h$ are bounded, Assumption \ref{nonlin}, we have
		\begin{eqnarray}
			\label{spaerr22}
			\left\|II_2\right\|^2_{L^2(\Omega;H)}&:=&\left\|\int_{0}^{t}(t-s)^{\alpha-1}S_{2	h}(t-s)P_h(F(X(s))-F(X^h(s)))ds\right\|^2_{L^2(\Omega;H)}\nonumber\\
			&\leq& C\int_{0}^{t}(t-s)^{2\alpha-2}\left\|S_{2	h}(t-s)P_h\right\|^2_{L(H)}\left\|F(X(s))-F(X^h(s))\right\|^2_{L^2(\Omega;H)} ds\nonumber\\
			&\leq& C\int_{0}^{t}(t-s)^{2\alpha-2}\left\|e(s)\right\|^2_{L^2(\Omega;H)} ds.
		\end{eqnarray}
		Substituting \eqref{spaerr211}, \eqref{spaerr212} in \eqref{spaerr21}, hence putting \eqref{spaerr21} and \eqref{spaerr22} in \eqref{spaerr2*} gives
		\begin{eqnarray}
			\label{spaerr2}
			\left\|II\right\|^2_{L^2(\Omega;H)}&\leq& Ch^{2\beta}+C h^4+C\int_{0}^{t}(t-s)^{2\alpha-2}\left\|e(s)\right\|^2_{L^2(\Omega;H)} ds\nonumber\\
			&\leq& Ch^{2\beta}+C\int_{0}^{t}(t-s)^{2\alpha-2}\left\|e(s)\right\|^2_{L^2(\Omega;H)} ds.
		\end{eqnarray}
		For the fourth term $\left\|IV\right\|_{L^2(\Omega;H)}$, by adding and subtracting a term, the use of  the triangle inequality yields
		\begin{eqnarray*}
			\left\|IV\right\|_{L^2(\Omega;H)}&=&\left\|\int_{0}^{t}\int_{\mathcal{X}}(t-s)^{\alpha-1}S_2(t-s)G(z,X(s))\widetilde{N}(dz,ds)\right.\nonumber\\
			&-&\left.\int_{0}^{t}\int_{\mathcal{X}}(t-s)^{\alpha-1}S_{2h}(t-s)P_hG(z,X(s))\widetilde{N}(dz,ds)\right\|_{L^2(\Omega;H)}\nonumber\\
			&\leq& \left\|\int_{0}^{t}\int_{\mathcal{X}}(t-s)^{\alpha-1}\left(S_2(t-s)-S_{2h}(t-s)P_h\right)G(z,X(s))\widetilde{N}(dz,ds)\right\|_{L^2(\Omega;H)}\nonumber\\
			&+&\left\|\int_{0}^{t}\int_{\mathcal{X}}(t-s)^{\alpha-1}S_{2h}(t-s)P_h\left(G(z,X(s))-G(z,X^h(s)\right))\widetilde{N}(dz,ds)\right\|_{L^2(\Omega;H)}\nonumber\\
			&=:&\left\|IV_1\right\|_{L^2(\Omega;H)}+\left\|IV_2\right\|_{L^2(\Omega;H)}.
		\end{eqnarray*}
		In a similar way as for $\left\|II\right\|^2_{L^2(\Omega;H)}$, using It\^o isometry \eqref{jumpint}, the boundedness of the operators $S_{2h}(t-s)$ and $P_h$, Assumption \ref{lips}, we deduce
		\begin{eqnarray}
			\label{spaerr42}
			\left\|IV_2\right\|^2_{L^2(\Omega;H)}&=&\left\|\int_{0}^{t}\int_{\mathcal{X}}(t-s)^{\alpha-1}S_{2h}(t-s)P_h\left(G(z,X(s))-G(z,X^h(s)\right))\widetilde{N}(dz,ds)\right\|^2_{L^2(\Omega;H)}\nonumber\\
			&=&\mathbb{E}\left[\int_{0}^{t}\int_{\mathcal{X}}(t-s)^{2\alpha-2}\left\|S_{2h}(t-s)P_h\right\|^2_{L(H)}\left\|G(z,X(s))-G(z,X^h(s))\right\|^2\upsilon(dz)ds\right]\nonumber\\
			&\leq& C\mathbb{E}\left[\int_{0}^{t}(t-s)^{2\alpha-2}\left(\int_{\mathcal{X}}\left\|G(z,X(s))-G(z,X^h(s))\right\|^2\upsilon(dz)\right)ds\right]\nonumber\\
			&\leq& C\mathbb{E}\left[\int_{0}^{t}(t-s)^{2\alpha-2}\left\|X(s)-X^h(s)\right\|^2ds\right]\nonumber\\
			&\leq& C\int_{0}^{t}(t-s)^{2\alpha-2}\left\|e(s)\right\|^2_{L^2(\Omega;H)}ds.
		\end{eqnarray}
		For the estimate $\left\|IV_1\right\|^2_{L^2(\Omega;H)}$, using It\^o isometry \eqref{jumpint}, \eqref{fradisop1} with $r=\rho=\beta$, Assumption \ref{growth} with $\tau=\frac{\beta}2$ and Lemma \ref{spa_reg}, leads to
		\begin{eqnarray}
			\label{spaerr41}
			\left\|IV_1\right\|^2_{L^2(\Omega;H)}&:=&\left\|\int_{0}^{t}\int_{\mathcal{X}}(t-s)^{\alpha-1}T_{2h}(t-s)G(z,X(s))\tilde{N}(dz,ds)\right\|^2_{L^2(\Omega;H)}\nonumber\\
			&=&\mathbb{E}\left[\int_{0}^{t}\int_{\mathcal{X}}(t-s)^{2\alpha-2}\left\|T_{2h}(t-s)G(z,X(s))\right\|^2\upsilon(dz)ds\right]\nonumber\\
			&\leq& C h^{2\beta}\mathbb{E}\left[\int_{0}^{t}(t-s)^{2\alpha-2}\left(\int_{\mathcal{X}}\left\|(-A)^{\frac{\beta}2}G(z,X(s))\right\|^2\upsilon(dz)\right)ds\right]\nonumber\\
			&\leq& C h^{2\beta}\int_{0}^{t}(t-s)^{2\alpha-2} \left(1+\|(-A)^{\frac{\beta}2}X(s)\|^2_{L^2(\Omega,H)}\right)ds\nonumber\\
			&\leq& C h^{2\beta}t^{2\alpha-1}\left(1+\|(-A)^{\frac{\beta}2}X_0\|^2_{L^2(\Omega,H)}\right)\leq  C h^{2\beta}.
		\end{eqnarray}
		Combining \eqref{spaerr42} and \eqref{spaerr41} it results that
		\begin{eqnarray}
			\label{spaerr4*}
			\left\|IV\right\|^2_{L^2(\Omega;H)}\leq  C h^{2\beta}+C\int_{0}^{t}(t-s)^{2\alpha-2}\left\|e(s)\right\|^2_{L^2(\Omega;H)}ds.
		\end{eqnarray}
		Using the similar procedure as in  $\left\|IV_1\right\|^2_{L^2(\Omega;H)}$, we have the following  estimate
		\begin{eqnarray}
			\label{spaerr3*}
			\left\|III\right\|^2_{L^2(\Omega;H)}\leq  C h^{2\beta}+C\int_{0}^{t}(t-s)^{2\alpha-2}\left\|e(s)\right\|^2_{L^2(\Omega;H)}ds.
		\end{eqnarray}
		Combining \eqref{spaerr1}, \eqref{spaerr2}, \eqref{spaerr4*}, \eqref{spaerr3*} and applying the fractional Gronwall's lemma (see \cite{Ye,Krub}) completes the proof of Theorem \ref{spaconv}. 
	\end{proof}
	\section{Fully discrete Euler scheme and its error estimates}
	\label{schemes}
	In this section, we consider a fully discrete approximation of SPDE \eqref{modeldis_timfrac}. Before defining our numerical approximation of the mild solution of the semidiscrete problem \eqref{modeldis_timfrac}, we present a useful to well rewrite this numerical approximation.
	
	We recall that the mild solution at $t_m=m\Delta t$, $\Delta t>0$ of the semi discrete problem \eqref{modeldis_timfrac} is given by
	\begin{eqnarray*}
		X^h(t_m)&=&S_{1h}(t_m)X^h_0+\int_0^{t_m}(t_m-s)^{\alpha-1}S_{2h}(t_m-s)P_hF(X^h(s))ds\\
		&+&\int_0^{t_m}(t_m-s)^{\alpha-1}S_{2h}(t_m-s)P_hB(X^h(s))dW(s)\\
		&+&\int_0^{t_m}\int_{\mathcal{X}}(t_m-s)^{\alpha-1}S_{2h}(t_m-s)P_hG(z,X^h(s))\widetilde{N}(dz,ds).
	\end{eqnarray*}
	
	By decomposing the integrals of the right-hand side of the previous equality using the Chasles relation, we obtain
	
	\begin{eqnarray}
		\label{dissol1}
		&&X^h(t_m)\nonumber\\
		&=&S_{1h}(t_m)X^h_0+\sum_{j=0}^{m-1}\int_{t_j}^{t_{j+1}}(t_m-s)^{\alpha-1}S_{2h}(t_m-s)P_hF(X^h(s))ds\nonumber\\
		&+&\sum_{j=0}^{m-1}\int_{t_j}^{t_{j+1}}(t_m-s)^{\alpha-1}S_{2h}(t_m-s)P_hB(X^h(s))dW(s)\nonumber\\
		&+&\sum_{j=0}^{m-1}\int_{t_j}^{t_{j+1}}\int_{\mathcal{X}}(t_m-s)^{\alpha-1}S_{2h}(t_m-s)P_hG(z,X^h(s))\widetilde{N}(dz,ds).
	\end{eqnarray}
	To build our numerical scheme we use the following approximations for all $s\in[t_j,t_{j+1})$ with $j\in\{0,\,1,\,...\,m-1\}$
	\begin{eqnarray*}
		(t_m-s)^{\alpha-1}S_{2h}(t_{m+1}-s)P_hF(X^h(s))\approx (t_m-t_j)^{\alpha-1}S_{2h}(t_m-t_j)P_hF(X^h(t_j)),
	\end{eqnarray*}
	\begin{eqnarray*} 
		(t_{m+1}-s)^{\alpha-1}S_{2h}(t_{m+1}-s)P_hB(X^h(s))\approx (t_m-t_j)^{\alpha-1}S_{2h}(t_m-t_j)P_hB(X^h(t_j)),
	\end{eqnarray*} 
	\begin{eqnarray*}
		(t_{m+1}-s)^{\alpha-1}S_{2h}(t_{m+1}-s)P_hG(z,X^h(s))\approx (t_m-t_j)^{\alpha-1}S_{2h}(t_m-t_j)P_hG(z,X^h(t_j)),
	\end{eqnarray*} 
	We can define our approximation $X^h_m$ of $X^h(m\Delta t)$ by
	\begin{eqnarray}
		\label{numsol}
		&&X^h_m\nonumber\\
		&=&S_{1h}(t_m)X^h_0+\sum_{j=0}^{m-1}\int_{t_j}^{t_{j+1}}(t_m-t_j)^{\alpha-1}S_{2h}(t_m-t_j)P_hF(X^h_j)ds\nonumber\\
		&+&\sum_{j=0}^{m-1}\int_{t_j}^{t_{j+1}}(t_m-t_j)^{\alpha-1}S_{2h}(t_m-t_j)P_hB(X^h_j)dW(s)\nonumber\\
		&+&\sum_{j=0}^{m-1}\int_{t_j}^{t_{j+1}}\int_{\mathcal{X}}(t_m-t_j)^{\alpha-1}S_{2h}(t_m-t_j)P_hG(z,X^h_j)\widetilde{N}(dz,ds).
	\end{eqnarray}
	Hence using \eqref{S1} and \eqref{S2}, it holds that
	\begin{eqnarray}
		&&X^h_m\nonumber\\
		&=&E_{\alpha,1}(A_ht_m^{\alpha})X^h_{m}+\Delta t\sum_{j=0}^{m-1}(t_m-t_j)^{\alpha-1}E_{\alpha,\alpha}(A_h(t_m-t_j)^{\alpha})S_{2h}P_hF(X^h_j)\nonumber\\
		&+&\sum_{j=0}^{m-1}(t_m-t_j)^{\alpha-1}E_{\alpha,\alpha}(A_h(t_m-t_j)^{\alpha})P_hB(X^h_j)(W_{t_{j+1}}-W_{t_j})\nonumber\\
		&+&\sum_{j=0}^{m-1}(t_m-t_j)^{\alpha-1}E_{\alpha,\alpha}(A_h(t_m-t_j)^{\alpha})\int_{t_j}^{t_{j+1}}\int_{\mathcal{X}}P_hG(z,X^h_j)\widetilde{N}(dz,ds).
	\end{eqnarray} 
	The strong convergence result of the fully discrete scheme are formulated in the following theorem.
	\begin{thm}[Main result].
		\label{main} Let Assumptions \ref{init} - \ref{growth} are fulfilled. Let $X^h_m$ be the numerical approximation defined in \eqref{numsol}. 
		We have the following estimates depending on the regularity  parameter $\beta$ of the initial solution $X_0$ and the power of the time fractional derivative $\alpha$
		\begin{eqnarray}
			\label{mainmul1}
			\left\|X(t_m)-X^h_m\right\|_{L^2(\Omega;H)}\leq C \left(h^{\beta}+\Delta t^{\frac {\min(\alpha\beta,2-2\alpha)}2}\right). 
		\end{eqnarray}
	\end{thm}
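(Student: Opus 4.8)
The plan is to split the error by the triangle inequality into a spatial and a temporal part,
\begin{eqnarray*}
\left\|X(t_m)-X^h_m\right\|_{L^2(\Omega;H)}\leq\left\|X(t_m)-X^h(t_m)\right\|_{L^2(\Omega;H)}+\left\|X^h(t_m)-X^h_m\right\|_{L^2(\Omega;H)},
\end{eqnarray*}
and to control the first summand directly by \thmref{spaconv}, which gives $C h^{\beta}$. It then remains to show that the temporal error $e^h_m:=X^h(t_m)-X^h_m$ obeys $\left\|e^h_m\right\|_{L^2(\Omega;H)}\leq C\Delta t^{\min(\alpha\beta,2-2\alpha)/2}$; added to the spatial bound this yields \eqref{mainmul1}.

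To estimate $e^h_m$, I would subtract \eqref{numsol} from \eqref{dissol1}, so that $e^h_m$ becomes a sum over the subintervals $[t_j,t_{j+1}]$, $j=0,\dots,m-1$, of the three families of differences associated with the drift $F$, the Wiener integral $B$ and the Poisson integral $G$. On each subinterval I insert the intermediate quantities $(t_m-s)^{\alpha-1}S_{2h}(t_m-s)P_h\Phi(X^h(t_j))$ and $(t_m-t_j)^{\alpha-1}S_{2h}(t_m-t_j)P_h\Phi(X^h(t_j))$, with $\Phi\in\{F,B,G\}$, decomposing every subinterval contribution into: (A) $(t_m-s)^{\alpha-1}S_{2h}(t_m-s)P_h\big(\Phi(X^h(s))-\Phi(X^h(t_j))\big)$, handled by the boundedness of $S_{2h}$ and $P_h$, the Lipschitz Assumptions \ref{nonlin} and \ref{lips}, and the semidiscrete temporal regularity \eqref{tim_reg1}, which bounds $\left\|X^h(s)-X^h(t_j)\right\|_{L^2(\Omega;H)}$ by $C\Delta t^{\min(\alpha\beta,2-2\alpha)/2}$ for $s\in[t_j,t_{j+1}]$; (B) $\big((t_m-s)^{\alpha-1}S_{2h}(t_m-s)-(t_m-t_j)^{\alpha-1}S_{2h}(t_m-t_j)\big)P_h\Phi(X^h(t_j))$, handled by the kernel estimate \eqref{semigroup_prp1*} (valid also for $A_h,S_{1h},S_{2h}$ by the remark after \lemref{semigroup12}), Assumption \ref{growth} with $\tau=0$ and \thmref{well_poss}; and (C) $(t_m-t_j)^{\alpha-1}S_{2h}(t_m-t_j)P_h\big(\Phi(X^h(t_j))-\Phi(X^h_j)\big)$, which feeds the Gronwall step through $\left\|e^h_j\right\|_{L^2(\Omega;H)}$. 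For the $B$ and $G$ families I first apply the It\^o isometries \eqref{browint} and \eqref{jumpint}, turning them into integrals of squared norms, and then carry out the same three-term splitting.

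Collecting the bounds, pieces (A) and (B) give the consistency rate: (A) produces $C\Delta t^{\min(\alpha\beta,2-2\alpha)/2}$ after integrating the integrable kernel $(t_m-s)^{\alpha-1}$ over $[0,t_m]$, while (B) yields a factor $\Delta t^{1-\alpha}$ from \eqref{semigroup_prp1*} together with a $\Delta t^{\alpha}$ boundary contribution from the terminal subinterval $[t_{m-1},t_m]$, where \eqref{semigroup_prp1*} degenerates and both integrands must instead be bounded separately; since $\alpha>\frac34$ one has $1-\alpha=(2-2\alpha)/2$ and $\alpha\geq(2-2\alpha)/2\geq\min(\alpha\beta,2-2\alpha)/2$, so (B) stays within the claimed order. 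After squaring, using the discrete Cauchy--Schwarz inequality in the form $\big(\sum_j\Delta t\,(t_m-t_j)^{\alpha-1}a_j\big)^2\leq\big(\sum_j\Delta t\,(t_m-t_j)^{\alpha-1}\big)\big(\sum_j\Delta t\,(t_m-t_j)^{\alpha-1}a_j^2\big)$ with $\sum_j\Delta t\,(t_m-t_j)^{\alpha-1}\leq C$, and the elementary inequality $(t_m-t_j)^{\alpha-1}\leq(t_m-t_j)^{2\alpha-2}$ for $t_m-t_j<1$, piece (C) and all recursive contributions combine into
\begin{eqnarray*}
\left\|e^h_m\right\|^2_{L^2(\Omega;H)}\leq C\Delta t^{\min(\alpha\beta,2-2\alpha)}+C\sum_{j=0}^{m-1}\Delta t\,(t_m-t_j)^{2\alpha-2}\left\|e^h_j\right\|^2_{L^2(\Omega;H)}.
\end{eqnarray*}
Applying the discrete fractional Gronwall inequality (see \cite{Yan}, or the discrete version of \cite[Lemma A.2]{Ye,Krub}) gives $\left\|e^h_m\right\|_{L^2(\Omega;H)}\leq C\Delta t^{\min(\alpha\beta,2-2\alpha)/2}$, and combining this with \thmref{spaconv} establishes \eqref{mainmul1}.

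\textbf{Main obstacle.} The two delicate points are the treatment of the singular kernel $(t_m-s)^{\alpha-1}$ on the terminal subinterval $[t_{m-1},t_m]$, where \eqref{semigroup_prp1*} is no longer applicable and the two integrands must be estimated one by one, and the verification that the recursive convolution kernel $(t_m-t_j)^{2\alpha-2}$ is admissible for the discrete fractional Gronwall inequality. The restriction $4\alpha-4>-1$, that is $\alpha>\frac34$, enters precisely when bounding piece (B) of the Wiener and Poisson terms after the It\^o isometry, where one is led to the integral $\int_0^{t}(t-s)^{4\alpha-4}\,ds$. Finally, the argument requires \eqref{tim_reg1} to hold for $X^h$ uniformly in $h$, which is the semidiscrete statement recorded right after \thmref{tim_reg}.
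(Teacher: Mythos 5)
Your proposal is correct and follows essentially the same route as the paper: the same triangle-inequality split into the spatial error (handled by \thmref{spaconv}) and the temporal error, the same three-term decomposition on each subinterval (kernel difference estimated via the discrete version of \eqref{semigroup_prp1*}, solution increment estimated via the semidiscrete analogue of \eqref{tim_reg1}, and the $X^h(t_j)-X^h_j$ term feeding a Gronwall recursion), with the It\^o isometries applied first to the Wiener and Poisson parts. The only real divergence is the closing step, where you retain the weakly singular kernel $(t_m-t_j)^{2\alpha-2}\Delta t$ and invoke a discrete fractional Gronwall inequality, whereas the paper bounds $(t_m-t_j)^{2\alpha-2}\leq\Delta t^{2\alpha-2}$ and applies the classical discrete Gronwall lemma; your variant is, if anything, the safer one, since it avoids the $\Delta t$-dependent factor that the paper's cruder bound introduces into the Gronwall constant.
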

	
	\begin{proof}
		Using triangle inequality yields
		\begin{eqnarray}
			&&\left\|X(t_m)-X^h_m\right\|_{L^2(\Omega;H)}\nonumber\\
			&\leq&\left\|X(t_m)-X^h(t_m)\right\|_{L^2(\Omega;H)}+\left\|X^h(t_m)-X^h_m\right\|_{L^2(\Omega;H)}. 
		\end{eqnarray}
		The space error is estimated in Theorem \ref{spaconv}. It remains to estimate the time error. Recall the mild solution given by \eqref{dissol1}
		\begin{eqnarray*}
			\label{dissol1*}
			&&X^h(t_m)\nonumber\\
			&=&S_{1h}(t_m)X^h_0+\sum_{j=0}^{m-1}\int_{t_j}^{t_{j+1}}(t_m-s)^{\alpha-1}S_{2h}(t_m-s)P_hF(X^h(s))ds\nonumber\\
			&+&\sum_{j=0}^{m-1}\int_{t_j}^{t_{j+1}}(t_m-s)^{\alpha-1}S_{2h}(t_m-s)P_hB(X^h(s))dW(s)\nonumber\\
			&+&\sum_{j=0}^{m-1}\int_{t_j}^{t_{j+1}}\int_{\mathcal{X}}(t_m-s)^{\alpha-1}S_{2h}(t_m-s)P_hG(z,X^h(s))\widetilde{N}(dz,ds),
		\end{eqnarray*}
		and the numerical solution $X^h_m$ given by \eqref{numsol}
		\begin{eqnarray*}
			\label{numsol*}
			X^h_m&=&S_{1h}(t_m)X^h_0+\sum_{j=0}^{m-1}\int_{t_j}^{t_{j+1}}(t_m-t_j)^{\alpha-1}S_{2h}(t_m-t_j)P_hF(X^h_j)ds\nonumber\\
			&+&\sum_{j=0}^{m-1}\int_{t_j}^{t_{j+1}}(t_m-t_j)^{\alpha-1}S_{2h}(t_m-t_j)P_hB(X^h_j)dW(s)\nonumber\\
			&+&\sum_{j=0}^{m-1}\int_{t_j}^{t_{j+1}}\int_{\mathcal{X}}(t_m-t_j)^{\alpha-1}S_{2h}(t_m-t_j)P_hG(z,X^h_j)\widetilde{N}(dz,ds).
		\end{eqnarray*}
		Subtracting these two previous equalities yields
		\begin{eqnarray*}
			&&X^h(t_m)-X^h_m\nonumber\\
			&=&\sum_{j=0}^{m-1}\int_{t_j}^{t_{j+1}} \left((t_m-s)^{\alpha-1}S_{2h}(t_m-s)P_hF(X^h(s))-(t_m-t_j)^{\alpha-1}S_{2h}(t_m-t_j)P_hF(X^h_j) \right) ds\nonumber\\
			&+&\sum_{j=0}^{m-1}\int_{t_j}^{t_{j+1}}\left( (t_m-s)^{\alpha-1}S_{2h}(t_m-s)P_hB(X^h(s))
		-(t_m-t_j)^{\alpha-1}S_{2h}(t_m-t_j)P_hB(X^h_j)\right) dW(s)\nonumber\\
			&+&\sum_{j=0}^{m-1}\int_{t_j}^{t_{j+1}}\int_{\mathcal{X}} \left((t_m-s)^{\alpha-1}S_{2h}(t_m-s)P_hG(z,X^h(s))
			-(t_m-t_j)^{\alpha-1}S_{2h}(t_m-t_j)P_hG(z,X^h_j)\right)\widetilde{N}(dz,ds)\nonumber\\
			&=:&K_1+K_2+K_3.
		\end{eqnarray*}
		Using  the triangle inequality
		\begin{eqnarray}
			\label{timerr*}
			&&\left\|X^h(t_m)-X^h_m\right\|_{L^2(\Omega;H)}\nonumber\\
			&\leq& \left\|K_1\right\|_{L^2(\Omega;H)}+\left\|K_2\right\|_{L^2(\Omega;H)}+\left\|K_3\right\|_{L^2(\Omega;H)}.
		\end{eqnarray}
		By adding and subtracting a  term, we recast $K_1$ as follows
		\begin{eqnarray}
			\label{timerr1*}
			K_1&=&\sum_{j=0}^{m-1}\int_{t_k}^{t_{k+1}}\left[(t_m-s)^{\alpha-1}S_{2h}(t_m-s)-(t_m-t_j)^{\alpha-1}S_{2h}(t_m-t_j)\right]P_hF(X^h(s))ds\nonumber\\
			&+&\sum_{j=0}^{m-1}\int_{t_k}^{t_{k+1}}(t_m-t_j)^{\alpha-1}S_{2h}(t_m-t_j)P_h\left[F(X^h(s))-F(X^h(t_k))\right]ds\nonumber\\
			&+&\sum_{j=0}^{m-1}\int_{t_k}^{t_{k+1}}(t_m-t_j)^{\alpha-1}S_{2h}(t_m-t_j)P_h\left[F(X^h(t_k))-F(X^h_k)\right]ds\nonumber\\
			&=:&K_{11}+K_{12}+K_{13}.
		\end{eqnarray}
		Using triangle inequality, the discrete version of \eqref{semigroup_prp1*}, Assumption \ref{nonlin} with, boundedness of $P_h$ and the discrete version of \eqref{boun}, Cauchy-Schwartz inequality and the variable change $k=m-j$ leads
		
		\begin{eqnarray}
			\label{timerr11*}
			&&\left\|K_{11}\right\|_{L^2(\Omega;H)}\nonumber\\
			&\leq&\sum_{j=0}^{m-1}\int_{t_j}^{t_{j+1}}\left\|(t_m-s)^{\alpha-1}S_{2h}(t_m-s)-(t_m-t_j)^{\alpha-1}S_{2h}(t_m-t_j)\right\|_{L(H)}\nonumber\\
			&&\hspace{3cm}\left\|P_hF(X^h(s))\right\|_{L^2(\Omega;H)}ds\nonumber\\
			&\leq&C\left(\sum_{j=0}^{m-1}\int_{t_j}^{t_{j+1}}(s-t_j)^{1-\alpha}(t_m-t_j)^{\alpha-1}(t_m-s)^{\alpha-1}ds\right)\left(1+\mathbb{E}\left[\sup_{0\leq s\leq T}\left\|X^h(s)\right\|^2\right]\right)^{\frac 12}\nonumber\\
			&\leq& C\Delta t^{1-\alpha}\left(\sum_{j=0}^{m-1}(t_m-t_j)^{\alpha-1}\int_{t_j}^{t_{j+1}}(t_m-s)^{\alpha-1}ds\right)\nonumber\\
			&\leq& C\Delta t^{1-\alpha}\left(\sum_{j=0}^{m-1}(t_m-t_j)^{\alpha-1}\right)^{\frac 12}\left(\sum_{j=0}^{m-1}\left(\int_{t_j}^{t_{j+1}}(t_m-s)^{2\alpha-2}ds\right)^2\right)^{\frac 12}\nonumber\\
			&\leq& C\Delta t^{1-\alpha}\left(\sum_{k=1}^{m}t_k^{-1+2\alpha-1}\right)^{\frac 12}\left(\Delta t\int_{t_j}^{t_{j+1}}(t_m-s)^{2\alpha-2}ds\right)^{\frac 12}\nonumber\\
			&\leq& C\Delta t^{1-\alpha}\left(\sum_{k=1}^{m}t_k^{-1+2\alpha-1}\Delta t\right)^{\frac 12}t_m^{\alpha-\frac 12}.
		\end{eqnarray}
		Let us recall the following estimate, for $\epsilon>0$ small enough
		\begin{eqnarray}
			\label{est}
			\sum_{k=1}^{m}t_k^{-1+\epsilon}\Delta t\leq C.
		\end{eqnarray}
		Inserting \eqref{est} in \eqref{timerr11*} with $\epsilon=2\alpha-1$ yields
		\begin{eqnarray}
			\label{timerr11}
			\left\|K_{11}\right\|_{L^2(\Omega;H)}\leq C\Delta t^{1-\alpha}.
		\end{eqnarray}
		For the second estimate $\left\|K_{12}\right\|_{L^2(\Omega;H)}$, applying triangle inequality, the boundedness of $S_{2h}(t)$ and $P_h$, Assumption \ref{nonlin}, \eqref{tim_reg1}, the variable change $k=m-j$ and \eqref{est} with $\epsilon=\alpha$ yields
		
		\begin{eqnarray}
			\label{timerr12}
			&&\left\|K_{12}\right\|_{L^2(\Omega;H)}\nonumber\\
			&\leq& \sum_{j=0}^{m-1}\int_{t_j}^{t_{j+1}}\left\|(t_m-t_j)^{\alpha-1}S_{2h}(t_m-t_j)P_h\left(F(X^h(s))-F(X^h(t_j))\right)\right\|_{L^2(\Omega;H)}ds\nonumber\\
			&\leq& \sum_{j=0}^{m-1}\int_{t_k}^{t_{k+1}}(t_m-t_j)^{\alpha-1}\left\|S_{2h}(t_m-t_j)P_h\right\|_{L(H)}\left\|F(X^h(s))-F(X^h(t_j))\right\|_{L^2(\Omega;H)}ds\nonumber\\
			&\leq& \sum_{j=0}^{m-1}\int_{t_k}^{t_{k+1}}(t_m-t_j)^{\alpha-1}(s-t_j)^{\frac{\min(\alpha\beta,2-2\alpha)}2}ds\nonumber\\
			&\leq& C \Delta t^{1+\frac{\min(\alpha\beta,2-2\alpha)}2}\sum_{j=0}^{m-1}(t_m-t_j)^{\alpha-1}\nonumber\\
			&\leq& C \Delta t^{\frac{\min(\alpha\beta,2-2\alpha)}2}\left(\sum_{j=0}^{m-1}(t_m-t_j)^{-1+\alpha}\Delta t\right) \leq C \Delta t^{\frac{\min(\alpha\beta,2-2\alpha)}2},
		\end{eqnarray}
		and using also triangle inequality boundedness of $S_{2h}(t)$ and $P_h$, Assumption \ref{nonlin}, we estimate $\left\|K_{13}\right\|_{L^2(\Omega;H)}$ as follows
		\begin{eqnarray}
			\label{timerr13}
			&&\left\|K_{13}\right\|_{L^2(\Omega;H)}\nonumber\\
			&\leq& \sum_{j=0}^{m-1}\int_{t_j}^{t_{j+1}}\left\|(t_m-t_j)^{\alpha-1}S_{2h}(t_m-t_j)P_h\left(F(X^h(t_j))-F(X^h_j)\right)\right\|_{L^2(\Omega;H)}ds\nonumber\\
			&\leq& \sum_{j=0}^{m-1}\int_{t_j}^{t_{j+1}}(t_m-t_j)^{\alpha-1}\left\|S_{2h}(t_m-t_j)P_h\right\|_{L(H)}\left\|F(X^h(t_j))-F(X^h_j)\right\|_{L^2(\Omega;H)}ds\nonumber\\
			&\leq& C \Delta t \sum_{j=0}^{m-1}(t_m-t_j)^{\alpha-1}\left\|X^h(t_j)-X^h_j\right\|_{L^2(\Omega;H)}\nonumber\\
			&\leq& C \Delta t^{\alpha} \sum_{j=0}^{m-1}\left\|X^h(t_j)-X^h_j\right\|_{L^2(\Omega;H)}
		\end{eqnarray}
		Adding \eqref{timerr11} - \eqref{timerr13}, it holds that
		\begin{eqnarray}
			\label{timerr1}
			\left\|K_1\right\|_{L^2(\Omega;H)}\leq C \Delta t^{\frac{\min(\alpha\beta,2-2\alpha)}2}+C \Delta t^{\alpha} \sum_{k=0}^{m-1}\left\|X^h(t_k)-X^h_k\right\|_{L^2(\Omega;H)}.
		\end{eqnarray}
		We will not give details of the estimate of $K_2$ as it is similar to that of $K_3$. Let us now estimate the norm of $K_3$. By adding and subtracting the same term, we rewrite it in three terms as follows
		\begin{eqnarray}
			\label{timerr3*}
			K_3&=&\sum_{j=0}^{m-1}\int_{t_j}^{t_{k+1}}\int_{\mathcal{X}}\left[(t_m-s)^{\alpha-1}S_{2h}(t_m-s)P_hG(z,X^h(s))-(t_m-t_j)^{\alpha-1}S_{2h}(t_m-t_j)\right]\nonumber\\
			&&\hspace{3cm}P_hG(z,X^h(s))\widetilde{N}(dz,ds)\nonumber\\
			&+&\sum_{j=0}^{m-1}\int_{t_j}^{t_{j+1}}\int_{\mathcal{X}}(t_m-t_j)^{\alpha-1}S_{2h}(t_m-t_j)P_h\left[G(z,X^h(s))-G(z,X^h(t_j))\right]\widetilde{N}(dz,ds)\nonumber\\
			&+&\sum_{j=0}^{m-1}\int_{t_j}^{t_{j+1}}\int_{\mathcal{X}}(t_m-t_j)^{\alpha-1}S_{2h}(t_m-t_j)P_h\left[G(z,X^h(t_k))-G(z,X^h_j)\right]\widetilde{N}(dz,ds)\nonumber\\
			&=:&K_{31}+K_{32}+K_{33}.
		\end{eqnarray}
		Applying again the It\^o isometry property \eqref{jumpint}, the fact that the variation of the compensated Poisson measure are independent, the discrete version of \eqref{semigroup_prp1*} with $t_2=t_m-t_j$ and $t_1=t_m-s$, Assumption \ref{growth} with $\tau=0$, boundedness of $P_h$, the discrete version of \eqref{boun} and Cauchy-Schwartz inequality leads
		\begin{eqnarray}
			\label{timerr31*}
			&&\left\|K_{31}\right\|^2_{L^2(\Omega;H)}\nonumber\\
			&=&\left\|\sum_{j=0}^{m-1}\int_{t_j}^{t_{j+1}}\int_{\mathcal{X}}\left[(t_m-s)^{\alpha-1}S_{2h}(t_m-s)-(t_m-t_j)^{\alpha-1}S_{2h}(t_m-t_j)\right]\right.\nonumber\\
			&&\hspace{1cm}\left.P_hG(z,X^h(s))\widetilde{N}(dz,ds)\right\|^2_{L^2(\Omega;H)}\nonumber\\
			&=&\sum_{j=0}^{m-1}\mathbb{E}\left[\int_{t_j}^{t_{j+1}}\int_{\mathcal{X}}\left\|(t_m-s)^{\alpha-1}S_{2h}(t_m-s)-(t_m-t_j)^{\alpha-1}S_{2h}(t_m-t_j)\right\|^2_{L(H)}\right.\nonumber\\
			&&\hspace{1cm}\left.\left\|P_hG(z,X^h(s))\right\|^2\upsilon(dz)ds\right]\nonumber\\
			&\leq&C\sum_{j=0}^{m-1}\int_{t_j}^{t_{j+1}}(s-t_j)^{2-2\alpha}(t_m-t_j)^{2\alpha-2}(t_m-s)^{2\alpha-2}\nonumber\\
			&&\hspace{2cm}\mathbb{E}\left[\int_{\mathcal{X}}\left\|P_hG(z,X^h(s))\right\|^2\upsilon(dz)\right]ds\nonumber\\
			&\leq& C\Delta t^{2-2\alpha}\left(\sum_{j=0}^{m-1}(t_m-t_j)^{2\alpha-2}\int_{t_j}^{t_{j+1}}(t_m-s)^{2\alpha-2}\left(1+\mathbb{E}\left[\left\|X^h(s)\right\|^2\right]\right)ds\right)\nonumber\\
			&\leq& C\Delta t^{2-2\alpha}\left(\sum_{j=0}^{m-1}(t_m-t_j)^{2\alpha-2}\int_{t_j}^{t_{j+1}}(t_m-s)^{2\alpha-2}ds\right)\left(1+\mathbb{E}\left[\sup_{0\leq s\leq T}\left\|X^h(s)\right\|^2\right]\right)\nonumber\\
			&\leq& C\Delta t^{2-2\alpha}\left(\sum_{j=0}^{m-1}(t_m-t_j)^{4\alpha-4}\right)^{\frac12}\left(\sum_{j=0}^{m-1}\left(\int_{t_j}^{t_{j+1}}(t_m-s)^{2\alpha-2}ds\right)^2\right)^{\frac12},
		\end{eqnarray}
		using additionally the variable change $k=m-j$ and \eqref{est} with $\epsilon=4\alpha-3$ yields
		\begin{eqnarray}
			\label{timerr31}
			\left\|K_{31}\right\|^2_{L^2(\Omega;H)}&\leq& C\Delta t^{2-2\alpha}\left(\sum_{k=1}^{m}t_k^{-1+4\alpha-3}\right)^{\frac12}\left(\Delta t\sum_{j=0}^{m-1}\int_{t_j}^{t_{j+1}}(t_m-s)^{4\alpha-4}ds\right)^{\frac 12}\nonumber\\
			&\leq& C\Delta t^{2-2\alpha}\left(\sum_{k=1}^{m}t_k^{-1+4\alpha-3}\Delta t\right)^{\frac12}\left(\int_0^{t_m}(t_m-s)^{4\alpha-4}ds\right)^{\frac 12}\nonumber\\
			&\leq& C\Delta t^{2-2\alpha}\left(\sum_{k=1}^{m}t_k^{-1+4\alpha-3}\Delta t\right)^{\frac12}t_m^{\frac {4\alpha-3}2}\leq C\Delta t^{2-2\alpha}.
		\end{eqnarray}
		To estimate the second term $\left\|K_{32}\right\|^2_{L^2(\Omega;H)}$, applying It\^o isometry \eqref{jumpint}, using boundedness of $S_{2h}(t)$, $P_h$ and Assumption \ref{lips}, Theorem \ref{tim_reg}, the variable change $k=m-j$ and \eqref{est} with $\epsilon=2\alpha-1$ yields
		\begin{eqnarray}
			\label{timerr32}
			&&\left\|K_{32}\right\|^2_{L^2(\Omega;H)}\nonumber\\
			&=&\left\|\sum_{j=0}^{m-1}\int_{t_k}^{t_{k+1}}\int_{\mathcal{X}}(t_m-t_j)^{\alpha-1}S_{2h}(t_m-t_j)P_h\left[G(z,X^h(s))-G(z,X^h(t_k))\right]\right.\nonumber\\
			&&\hspace{3cm}\left.\widetilde{N}(dz,ds)\right\|^2_{L^2(\Omega;H)}\nonumber\\
			&=&\sum_{j=0}^{m-1}\mathbb{E}\left[\int_{t_j}^{t_{j+1}}\int_{\mathcal{X}}\left\|(t_m-t_j)^{\alpha-1}S_{2h}(t_m-t_j)P_h\left[G(z,X^h(s))-G(z,X^h(t_j))\right]\right\|^2\upsilon(dz)ds\right]\nonumber\\
			&\leq&\sum_{j=0}^{m-1}\mathbb{E}\left[\int_{t_j}^{t_{j+1}}(t_m-t_j)^{2\alpha-2}\left\|S_{2h}(t_m-t_j)P_h\right\|^2_{L(H)}\int_{\mathcal{X}}\left\|G(z,X^h(s))-G(z,X^h(t_j))\right\|^2\upsilon(dz)ds\right]\nonumber\\
			&\leq&C\sum_{j=0}^{m-1}\int_{t_j}^{t_{j+1}}(t_m-t_j)^{2\alpha-2}\left\|X^h(s))-X^h(t_j)\right\|^2_{L^2(\Omega;H)}ds\nonumber\\
			&\leq&C\sum_{j=0}^{m-1}\int_{t_j}^{t_{j+1}}(t_m-t_j)^{2\alpha-2}(s-t_j)^{\min(\alpha\beta,2-2\alpha)}ds\nonumber\\
			&\leq&C\Delta t^{1+\min(\alpha\beta,2-2\alpha)}\left(\sum_{j=0}^{m-1}(t_m-t_j)^{2\alpha-2}\right)\nonumber\\
			&\leq&C\Delta t^{\min(\alpha\beta,2-2\alpha)}\left(\sum_{k=1}^{m}t_k^{-1+2\alpha-1}\Delta t\right)\leq C\Delta t^{\min(\alpha\beta,2-2\alpha)}.
		\end{eqnarray}
		To estimate the third term $\left\|K_{33}\right\|^2_{L^2(\Omega;H)}$, applying It\^o isometry \eqref{jumpint}, using boundedness of $S_{2h}(t)$, $P_h$ and Assumption \ref{lips} yields
		\begin{eqnarray}
			\label{timerr33}
			&&\left\|K_{33}\right\|^2_{L^2(\Omega;H)}\nonumber\\
			&=&\left\|\sum_{j=0}^{m-1}\int_{t_j}^{t_{j+1}}\int_{\mathcal{X}}(t_m-t_j)^{\alpha-1}S_{2h}(t_m-t_j)P_h\left[G(z,X^h(t_j))-G(z,X^h_j)\right]\widetilde{N}(dz,ds)\right\|^2_{L^2(\Omega;H)}\nonumber\\
			&=&\sum_{j=0}^{m-1}\mathbb{E}\left[\int_{t_j}^{t_{j+1}}\int_{\mathcal{X}}\left\|(t_m-t_j)^{\alpha-1}S_{2h}(t_m-t_j)P_h\left[G(z,X^h(t_j))-G(z,X^h_j)\right]\right\|^2\upsilon(dz)ds\right]\nonumber\\
			&\leq&\sum_{j=0}^{m-1}\mathbb{E}\left[\int_{t_j}^{t_{j+1}}(t_m-t_j)^{2\alpha-2}\left\|S_{2h}(t_m-t_j)P_h\right\|^2_{L(H)}\int_{\mathcal{X}}\left\|G(z,X^h(t_j))-G(z,X^h_j)\right\|^2\upsilon(dz)ds\right]\nonumber\\
			&\leq&C\sum_{j=0}^{m-1}(t_m-t_j)^{2\alpha-2}\int_{t_j}^{t_{j+1}}\left\|X^h(t_j)-X^h_j\right\|^2_{L^2(\Omega;H)}ds\nonumber\\
			&\leq&C\Delta t\sum_{j=0}^{m-1}(t_m-t_j)^{2\alpha-2}\left\|X^h(t_j)-X^h_j\right\|^2_{L^2(\Omega;H)}\nonumber\\
			&\leq&C\Delta t^{2\alpha-1}\sum_{j=0}^{m-1}\left\|X^h(t_j)-X^h_j\right\|^2_{L^2(\Omega;H)}.
		\end{eqnarray}
		Substituting \eqref{timerr31} - \eqref{timerr33} in \eqref{timerr3*} leads
		\begin{eqnarray}
			\label{timerr3}
			\left\|K_3\right\|^2_{L^2(\Omega;H)}\leq C \Delta t^{\min(\alpha\beta,2-2\alpha)}+C\Delta t^{2\alpha-1}\sum_{k=0}^{m-1}\left\|X^h(t_k)-X^h_k\right\|^2_{L^2(\Omega;H)}.
		\end{eqnarray}
		Using the similar procedure as for $K_3$, we obtain the following estimate of the norm of $K_2$
		\begin{eqnarray}
			\label{timerr2}
			\left\|K_2\right\|^2_{L^2(\Omega;H)}\leq C \Delta t^{\min(\alpha\beta,2-2\alpha)}+C\Delta t^{2\alpha-1}\sum_{k=0}^{m-1}\left\|X^h(t_k)-X^h_k\right\|^2_{L^2(\Omega;H)}.
		\end{eqnarray}
		Substituting \eqref{timerr1}, \eqref{timerr3} and \eqref{timerr2} in \eqref{timerr*} yields
		\begin{eqnarray}
			\label{timerr}
			&&\left\|X(t_m)-X^h_m\right\|^2_{L^2(\Omega;H)}\nonumber\\
			&\leq& C \Delta t^{\min(\alpha\beta,2-2\alpha)}+C\Delta t^{2\alpha-1}\sum_{k=0}^{m-1}\left\|X^h(t_k)-X^h_k\right\|^2_{L^2(\Omega;H)}.
		\end{eqnarray} 
		Applying discrete Gronwall's lemma to \eqref{timerr} and taking the square root leads
		\begin{eqnarray}
			\label{tim_conv}
			\left\|X(t_m)-X^h_m\right\|_{L^2(\Omega;H)}\leq C \Delta t^{\frac {\min(\alpha\beta,2-2\alpha)}2}.
		\end{eqnarray}
		Combining \eqref{tim_conv} and Theorem \ref{spaconv} completes the proof of Theorem \ref{main}.
	\end{proof}


\begin{thebibliography}{00}
		\bibitem{Alm}
		Al-Maskari, M., Karaa, S.: 
		\newblock{Numerical approximation of semilinear subdiffusion equations with nonsmooth initial data}. 
		\newblock{SIAM J. Numer. Anal. \textbf{57}(3),  1524--1544} (2019)
		
		
		
		
		
		\bibitem{Tankov}
		Cont, R., Tankov, P.: 
		\newblock{Financial modelling with jump process.}
		\newblock{ In: Financial Mathematics series. CRC Press. Boca, FL (2000)}
		
		
		
		
		\bibitem{Elz} 
		Elzaki, T. M., Daoud, Y., Biazar, J.:
		\newblock{Decomposition Method for Fractional Partial Differential Equations Using Modified Integral Transform}. 
		\newblock{World Applied Sciences Journal. \textbf{37}(1), 18--24 (2019)}
		
		\bibitem{For}
		Ford, N. J., Xiao J., Yan Y.:  
		\newblock{A finite element method for time fractional partial differential equation}. 
		\newblock{Fractional Calculus and Aplied analysis (2011). https://doi.org/10.2478/s13540-011-0028-2}
		
		\bibitem{Fuj} 
		Fujita, F., Suzuki, T.:
		\newblock{ Evolution problems (Part 1). Handbook of Numerical Analysis (P. G. Ciarlet and J.L. Lions eds)}.
		\newblock{\textbf{2} Amsterdam, The Netherlands: North-Holland ,789--928 (1991)}
		
		\bibitem{Gao} 
		Gao, G. H., Sun b, Z. Z., Zhang, H. W.:
		\newblock{A new fractional numerical differentiation formula to approximate the Caputo fractional derivative and its applications}.
		\newblock{Journal of Computational Physics \textbf{259}, 33--50 (2014)}
		
		\bibitem{ExpF}
		Garrappa R.: 
		\newblock{ A family of Adams exponential integrators for fractional linear systems}.
		\newblock{Computers \& Mathematics with Applications  \textbf{66}(5), 717--727 (2013)}
		
		\bibitem{ML2}
		Garrappa, R., Popolizio, M.:
		\newblock{Computing the Matrix Mittag-Leffler Function with Applications to Fractional Calculus}
		\newblock{ J Sci Comput} \textbf{77}, 129--153 (2018)
		
		\bibitem{Gun}
		Gunzburger, M., Li, B., Wang, J.:
		\newblock{Sharp convergence rates of time discretization for srochastic time-fractional PDEs subject to additive space-time white noise}.
		\newblock{arXiv:1704.02912v2 [math.NA] 8 Aug 2018.} 
		
		\bibitem{Hau}
		Haubold, H. J., Mathai, A. M., Saxena, R. K.:  \newblock{Mittag-Leffler functions and their applications}. 
		\newblock{J Appl Math (2011) (Article ID 298628)}
		
		\bibitem{Henry}
		Henry, D.:
		\newblock{Geometric Theory of semilinear parabolic Equations.}
		\newblock{Lecture notes in Mathematics. \textbf{840}. Berlin : Springer, (1981)}
		
		
		\bibitem{Jia}
		Jianga, Y., Ma, J.:
		\newblock{High-order finite element methods for time-fractional partial
			differential equations}.
		\newblock{Journal of Computational and Applied Mathematics \textbf{235}, 3285--3290 (2011)}
		
		
		
		
		
		\bibitem{Krub}
		Kruse, R.: 
		\newblock{Strong and weak approximation of semilinear stochastic evolution equations}.
		\newblock{Springer, New-York (2014)}
		
		
		
		\bibitem{Lia}
		Li, X., Yang, X., Zhang, Y.: 
		\newblock{Error Estimates of Mixed Finite Element Methods for Time-Fractional Navier-Stokes Equations}. 
		\newblock{J Sci Comput  \textbf{70}, 500--515 (2017)}
		
		\bibitem{Lin}
		Lin, Y., Xu, C.:  
		\newblock{Finite difference/spectral approximations for
			the time-fractional diffusion equation}.
		\newblock{Journal of Computational Physics \textbf{225},  1533--1552 (2007)}
		
		\bibitem{Liu}
		Liu, Y., Li, H., Gao, W., He, S., Fang, Z.: 
		\newblock{A New Mixed Element Method for a Class of Time-Fractional Partial Differential Equations}.
		\newblock{The Scientific World Journal (2014).  http://dx.doi.org/10.1155/2014/141467}
		
		\bibitem{Lor}
		Lord, G. J., Tambue, A.:
		\newblock{Stochastic exponential integrators for the finite element discretization of SPDEs for multiplicative \& additive noise,}
		\newblock{IMA J Numer. Anal. \textbf{2},  515--543 (2013)}
		
		\bibitem{Mai} 
		Mainardi, F.: 
		\newblock{The fundamental solutions for the fractional diffusion-wave equation.} 
		\newblock{Appl. Math. Lett. \textbf{9}, 23--28 (1996).}
		
		
		\bibitem{Man}
		Mandreka, V., R\"{u}diger, B.:
		\newblock{Stochastic integration in Banach space, in: Probability Theory and Stochastic Modelling, in: Theory and applications.}
		\newblock{Springer, Cham (2015)}
		
		
		
		\bibitem{ML3}
		Moret, I., Novati, P.:
		\newblock{On the Convergence of Krylov Subspace Methods for Matrix Mittag-Leffler Functions.}
		\newblock{SIAM Journal on  Numerical Analysis} \textbf{49}(5), 2144--216 (2011)
		
		
		
		
		\bibitem{manto}
		Mukam, J. D., Tambue, A.:
		\newblock{Optimal strong convergence rates of numerical methods for semilinear parabolic SPDE driven by Gaussian noise and Poisson random measure.}
		\newblock{ Computers \& Mathematics with Applications} \textbf{77}(10), 2786--2803 (2019)
		
		\bibitem{Nou}
		Noupelah, A. J., Tambue, A.:
		\newblock{Optimal strong convergence rates of some Euler-type timestepping schemes for the finite element discretization SPDEs driven by additive fractional Brownian motion and Poisson random measure.}
		\newblock{Preprint,  arXiv:1912.12751 (2019)}
		
		\bibitem{Osm} 
		Osman, S. A., Langlands, T. A. M.:
		\emph{An implicit Keller Box numerical scheme for the solution of fractional subdiffusion equations}. 
		\newblock{Applied Mathematics and Computation \textbf{348}, 609--626 (2019). https://doi.org/10.1016/j.amc.2018.12.015}
		
		\bibitem{Pazy}
		Pazy, A.: 
		\newblock {Semigroups of Linear Operators and Applications to Partial Differential Equations.}
		\newblock{Applied Mathematical Sciences. Springer-Verlag, New York, v 44, (1983)}
		
		\bibitem{Platen1}
		Platen, E., Bruti-Liberati, N.:
		\newblock{Numerical solution of stochastic differential equations with jumps in finance.}
		\newblock{ in: Stochastic Modelling and Applied Probability.} Volume 64, Springer-Verlag, Berlin (2010)
		
		\bibitem{ML1}
		Popolizio, M.: 
		\newblock{ On the Matrix Mittag-Leffler Function: Theoretical Properties and Numerical Computation.}
		\newblock{ Mathematics, (2019). https://doi.org/10.3390/math7121140}
		
		\bibitem{Pra}
		Prato D., Zabczyk G. J.:
		\newblock{Stochastic Equations in Infinite Dimensions.}
		\newblock{Cambridge University Press. Cambridge. United Kingdom, vol. 152, (2014)}
		
		\bibitem{Prevot}
		Pr\'{e}v\^{o}t, C., R\"{o}ckner, M.:
		\newblock{ A Concise Course on Stochastic Partial Differential Equations.}
		\newblock{Lecture Notes in Mathematics,  Springer: Berlin, vol. 1905, (2007)}
		
		\bibitem{Pri}
		Priya, G. S., Prakash, P., Nieto, J. J., Kayar, Z.: 
		\newblock{Higher Order Numerical Scheme for the Fractional Heat Equation with Dirichlet and Neumann Boundary Conditions}. 
		\newblock{Numerical Heat Transfer, Part B: Fundamentals, \textbf{63}(6), 540--559 (2013)}
		
		
		
		
		\bibitem{ATthesis}
		Tambue, A.:  
		\newblock{ Efficient Numerical Schemes for Porous Media Flow.}
		\newblock { PhD Thesis, Department of Mathematics, Heriot--Watt University} (2010)
		
		
		
		
		
		
		
		
		
		
		\bibitem{Yan} 
		Yang, Y., Zeng, F.: 
		\newblock{Numerical analysis of linear and nonlinear time-fractional subdiffusion equations}.
		\newblock{arXiv:1901.06814v1 [math.NA] 21 Jan 2019.}
		
		\bibitem{Ye}
		Ye, H., Gao, J., Ding,  Y.:
		\newblock{A generalized Gronwall inequality and its application to a fractional differential equation}.  
		\newblock{J. Math. Anal. Appl. \textbf{328}, 1075--1081 (2007)}
		
		\bibitem{Zha}
		Zhang, L., Ding, Y., Hao, K., Hu, L.:  
		\newblock{Moment stability of fractional stochastic evolution equations with Poisson jumps}.
		\newblock{ International Journal of Systems Science. \textbf{45}(7), 1539--1547 (2014)}
		
		\bibitem{Zoua}
		Zou, G. A., Wang, B., Zhou, Y.:
		\newblock{Existence and regularity of mild solution to
			fractional stochastic evolution equations}.
		\newblock{Math. Model. Nat. Phenom. 13 (2018). https://doi.org/10.1051/mmnp/2018004} 
		
		\bibitem{Zoub}
		Zou, G. A.: 
		\newblock{Galerkin finite element method for time-fractional stochastic diffusion equations},
		\newblock{Comp. Appl. Math. (2018). https://doi.org/10.1007/s40314-018-0609-3}
		
		
		%
		%
		
	\end{thebibliography}
\end{document}